\newcolumntype{Y}{>{\centering\arraybackslash}X}
\pgfplotsset{compat=1.13}
\tikzset{cross/.style={cross out, draw=black, fill=none, minimum size=2*(#1-\pgflinewidth), inner sep=0pt, outer sep=0pt}, cross/.default={2pt}}
\pgfplotsset{compat=newest}
\newtheorem{remark}{Remark}
\newcommand{\R}{\mathbb{R}}
\newcommand{\Yspace}{Y}
\renewcommand{\d}{\ \mathrm{d}}
\renewcommand{\d}{{\,\mathrm{d}}}
\newcommand{\norm}[1]{\left\lVert#1\right\rVert}
\newcommand{\restr}[1]{|_{#1}}
\newcommand{\lznormT}[1]{\left\lVert#1\right\rVert_{L^2(\Hcell)}}
\newcommand{\lznormO}[1]{\left\lVert#1\right\rVert_{L^2(\omega)}}
\newcommand{\hmesh}{\mathcal{M}_h}
\newcommand{\Hmesh}{\mathcal{T}_H}
\newcommand{\hcell}{\mathbf{c}}
\newcommand{\Hcell}{\mathbf{t}}
\newcommand{\HQuadW}{\mu}
\newcommand{\HQuadP}{q}
\newcommand{\HQuadPi}{\HQuadP_{\Hcell,i}}
\newcommand{\hQuadW}{\nu}
\newcommand{\hQuadP}{r}
\newcommand{\hQuadPi}{\hQuadP_{\hcell,i}}
\newcommand{\RightHandSide}{l}
\newcommand{\RightHandSideh}{l_h}
\DeclareRobustCommand\onedot{\futurelet\@let@token\@onedot}
\def\@onedot{\ifx\@let@token.\else.\null\fi\xspace}
\def\ie{\emph{i.e}\onedot} 
\def\cf{\emph{cf}\onedot}
\def\etal{\emph{et al}\onedot}
\title{
Two-Scale Finite Element Approximation of a Homogenized Plate Model
}
\author{
Martin Rumpf
\thanks{Institute for Numerical Simulation, University of Bonn, Endenicher Allee 60, 53115 Bonn, Germany
(\email{martin.rumpf@uni-bonn.de}, \email{s6stsimo@uni-bonn.de}, \email{s6chsmoc@uni-bonn.de} ).
}
\and Stefan Simon \footnotemark[1]
\and Christoph Smoch\footnotemark[1]
}
\begin{document}

\maketitle

\begin{abstract}
This paper studies the discretization of a homogenization and dimension reduction model
for the elastic deformation of microstructured thin plates proposed by Hornung, Neukamm, and Vel{\v{c}}i{\'c} \cite{HoNeVe14}.
Thereby, a nonlinear bending energy is based on a homogenized quadratic form 
which acts on the second fundamental form associated with the elastic deformation. 
Convergence is proved for a multi-affine finite element discretization 
of the involved three-dimensional microscopic cell problems 
and a discrete Kirchhoff triangle discretization of the two-dimensional isometry-constrained 
macroscopic problem. Finally, the convergence properties are numerically verified in selected test cases 
and qualitatively compared with deformation experiments for microstructured sheets of paper. 
\end{abstract}

\begin{keywords}
two-scale plate model, dimension reduction,
heterogeneous multi-scale finite element method,
discrete Kirchhoff triangle
\end{keywords}

\begin{AMS}
  65N12, % Stability and convergence of numerical methods for boundary value problems involving PDEs
  65N30, % Finite element, Rayleigh-Ritz and Galerkin methods for boundary value problems involving PDEs
  %74A30 % Nonsimple materials
  74E25 %Texture in solids
  %74G15 % Numerical approximation of solutions
  74K20  % Mechanics of deformable solids -> Thin bodies, structures -> Plates
\end{AMS}

%%%%%%%%%%%%%%%%%%%%%%%%%%%%%%%%%%%%%%%%%%%%%%%%%%%%%%%%%%%%%%
%%%%%%%%%%%%%%%%%%%%%%%%%%%%%%%%%%%%%%%%%%%%%%%%%%%%%%%%%%%%%%
%%%%%%%%%%%%%%%%%%%%%%%%%%%%%%%%%%%%%%%%%%%%%%%%%%%%%%%%%%%%%%
\section{Introduction}
In this paper, the numerical discretization and simulation of a homogenized model of a thin plate is investigated. 
The Kirchhoff model of thin plate elasticity was derived in the celebrated work by Friesecke~\etal  \cite{FrJaMue02b} as the 
$\Gamma$-limit of 3d-nonlinear elasticity in the limit for vanishing plate thickness.  A geometrically natural generalization of isometry-constrained bending functionals and their stationary solutions have been investigated by Hornung in \cite{Ho17}.

Hornung~\etal in \cite{HoNeVe14} combined this result with homogenization to derive a homogenized nonlinear plate model from 3d-nonlinear elasticity via simultaneous homogenization and dimension reduction. 
They considered a plate of vertical thickness $\delta$ with a three-dimensional periodic microstructure of size $\varepsilon$ composed by 
 elastic material which is homogenous in the vertical direction. The derived limit energy is the integral over a quadratic form acting on the second fundamental form of the elastic deformation and describing the effective deformation of the plate. This quadratic form describes the effective behavior in the simultaneous limit 
for vanishing plate thickness and size of the microstructure with fixed ratio $\gamma=\tfrac{\delta}{\varepsilon}$
and results from a minimization problem on the unit cell similar to the usual corrector problem in two-scale models
\cite{EEnHu03,AlBr05}.  
The case where $\varepsilon^2 \leq \delta \leq \varepsilon$ was discussed by Vel{\v{c}}i{\'c} in \cite{Ve15}.
Recently, B\"ohnlein~\etal extended this theory to homogenized prestrained plates in \cite{BoNePa23}. 
In \cite{NeOl15} Neukamm and Olbermann 
considered the spatial periodic homogenization in the context of nonlinear bending functionals for plates.
In this case, the limit functional is not simply a quadratic function of the second fundamental of the deformed plate.
In \cite{BeSc21} de Benito Delgado and Schmidt investigated the plate theory limit for heterogeneous multilayers with material parameters varying strongly in vertical direction.

In this paper we will pick up the model derived in \cite{HoNeVe14}, and derive a discretization and approximation of this model, using the heterogeneous multiscale method (HMM)  \cite{EMiZh05}. To this end, we consider the straightforward generalization to macroscopically varying microstructures. 

HMM was introduced in \cite{EEn03} to compute solutions to problems, where separation of (multiple) scales occurs. If the data of the problem is oscillating on a small scale (as in the problem described in this paper), it is often out of reach to fully resolve the data. Instead, one is interested in the effective macroscopic behavior of the problem. In most cases, apart from toy problems in 1d, the effective macroscopic problem cannot explicitly be written down, but has to be computed as the solution of a problem on the unit cell. In \cite{EMiZh05}, error estimates for HMM were given under the assumption that the solutions to the microscopic problems were given. In \cite{Ab05} and \cite{Ab06}, the fully discrete HMM for linear elliptic and elastic problems was analyzed.

Different approaches have been discussed for the numerical approximation of the nonlinear bending of thin plates. 
The minimizing of nonlinear bending energies leads to a fourth-order system of Euler-Lagrange equations.
The discrete Kirchhoff Triangle (DKT) is a non-conforming finite element, which only requires to fulfill 
the isometry constraint at nodes of the triangulation. A conforming finite element approach (cf.~\cite{Br07}) would require $C^1$-elements 
and a proper notion of discrete isometric deformations.
Bartels~\cite{Ba11} used DKT to approximate bending isometries in the case of deformations of thin elastic plates. 
He also took this model into account in \cite{Ba17} to approximate plate deformations in the F\"{o}ppl--von K\'{a}rm\'{a}n model in particular to verify a break of symmetry for deformations of smooth, circular cones. Furthermore, the DKT element was also used in \cite{BaBoNo17} to approximate bending isometries of bilayer plates. In \cite{RuSiSm22}, this work has been extended to the case of thin elastic shells described by parametrized surfaces. We pick up this approach as well for the macroscopic plate model.
As an alternative, Bonito \etal \cite{BoNoNt20} proposed a discontinuous Galerkin approach for isometric deformations of thin elastic plates.
In \cite{BoGuNo21} Bonito \etal analyzed the convergence of a local discontinuous Galerkin approach for prestrained plates. 

% Structure of the paper
This paper is organized as follows: In ~\cref{sec:HoNeVe14}, the homogenized plate bending model derived in \cite{HoNeVe14} is reviewed. In ~\cref{sec:HomBend}, the model we consider is formulated, the associated corrector problem is derived and  the isometry constraint is used to rewrite the bending energy such that it is quadratic in the Hessian of the deformation. Then, ~\cref{sec:ApproxHomTensor} presents the numerical approximation of the microscopic problem, and ~\cref{sec:DKTapprox} resumes the DKT-element and as the main result of this paper the convergence of the fully discrete two-scale problem to the continuous two-scale problem
is stated. The proof relies on $\Gamma$-convergence and uses the paradigm of the heterogeneous multiscale method. In ~\cref{sec:Implementation}, details on the implementation of the numerical method are presented. Finally, in ~\cref{sec:NumEx}, the results of several numerical experiments are shown and the 
convergence behavior is quantitatively analyzed. Furthermore, we compare the simulation results with mechanical experiments for microstructured sheets of paper.

%%%%%%%%%%%%%%%%%%%%%%%%%%%%%%%%%%%%%%%%%%%%%%%%%%%%%%%%%%%%%%
%%%%%%%%%%%%%%%%%%%%%%%%%%%%%%%%%%%%%%%%%%%%%%%%%%%%%%%%%%%%%%
%%%%%%%%%%%%%%%%%%%%%%%%%%%%%%%%%%%%%%%%%%%%%%%%%%%%%%%%%%%%%%
\section{
Homogenization and dimension reduction limit by Hornung \etal \cite{HoNeVe14} revisited}\label{sec:HoNeVe14}

Let $\Omega_\delta \coloneqq \omega \times \delta I$ be the reference configuration of a thin elastic plate, with a Lipschitz domain with piecewise $C^1$ boundary $\omega\subset \R^2$ as midsurface, $I = (-\frac12,\frac12)$ and thickness $\delta >0$. Let $W:\R^2\times \R^{3\times 3}\rightarrow \R$ be a nonlinear energy density which is periodic with period $1$ in the first component. In \cite{HoNeVe14}, a nonlinear elastic energy 
\begin{align}\label{eq:3dEnergy}
\frac{1}{\delta^3} \int_{\Omega_\delta} W(\frac{\xi'}{\varepsilon}, \nabla \phi_\delta(\xi)) \d \xi
\end{align}
is considered for the thin plate of thickness $\delta$ and composed of a microstructure of width $\varepsilon >0$,
which is homogeneous in vertical direction (cf.~\cref{fig:sketch} on the left).
Here, $\xi'=(\xi_1,\xi_2)$ represents the in plane coordinates. 
%
%The above model appears as the limit problem with a 
%simultaneous homogenization of a microstructured thin plate and a dimension reduction from 3D volume elasticity to 
%a 2D elastic bending model as it is studied by Hornung~\etal in the case where $Q^3$ is independent of the macroscopic variable $x$.
%B\"ohnlein~\etal \cite{BoNePa23} considered the generalized case of piecewise constant macroscopic dependence on grain domains. 
%Skipping the dependence on the macroscopic variable one takes into account deformations $\phi_\delta:\Omega_\delta \rightarrow \R^3$ of the reference configuration $\Omega_\delta \coloneqq \omega \times \delta I$ 
%of a thin plate with thickness $\delta >0$ and a microscopic scale parameter $\varepsilon >0$
%with an elastic energy 
%\begin{align}\label{eq:3dEnergy}
%\frac{1}{\delta^3} \int_{\Omega_\delta} W(\frac{\xi'}{\varepsilon}, \nabla \phi_\delta(\xi)) \d \xi
%\end{align}
%with $\xi'=(\xi_1,\xi_2)$, which is 
%properly rescaled in $\delta$ for an energy density $W$, which is homogeneous in vertical direction  
%fffffff
\begin{figure}[htbp]
\begin{tikzpicture}
\node (0,0) {\includegraphics[width= 0.98\linewidth]{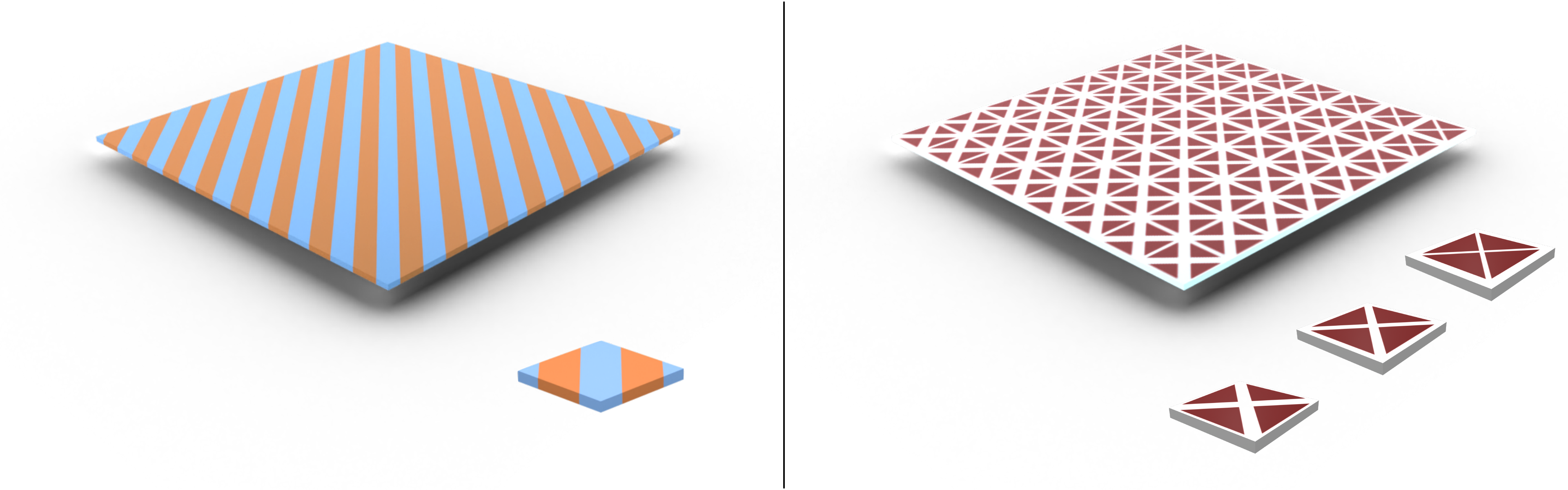}};
\node at (-2.55,-1.05) {$\delta$};
\node at (-2,-1.7) {$\varepsilon$};
\node at (-1.1,-1.7) {$\varepsilon$};
\draw[| - |] (-2.3,-1.02) -- (-2.3,-1.12);
\draw[| - |] (-2.2,-1.3) -- (-1.6,-1.55);
\draw[| - |] (-0.8,-1.3) -- (-1.4,-1.55);

\begin{scope}[xshift=5.3cm, yshift = -0.28cm]
\node at (-2.55,-1.05) {$\delta$};
\node at (-2,-1.7) {$\varepsilon$};
\node at (-1.0,-1.6) {$\varepsilon$};
\draw[| - |] (-2.3,-1.02) -- (-2.3,-1.12);
\draw[| - |] (-2.2,-1.25) -- (-1.5,-1.55);
\draw[| - |] (-0.9,-1.2) -- (-1.4,-1.55);
\end{scope}
\end{tikzpicture}
\caption{Thin plates with thickness $\delta$ and periodic in-plane microstructure with size $\varepsilon$,
left: homogeneous microstructure, right: Macroscopically varying microstructure.}
\label{fig:sketch}
\end{figure}
%fffffff
A rescaling of \eqref{eq:3dEnergy} onto the fixed 3D domain  $\Omega \coloneqq \omega \times I$
reads as
\begin{align}
\label{eq:rescaled3dEnergy}
\mathcal{W}^{\delta,\varepsilon}[\psi_\delta] \coloneqq \frac{1}{\delta^2} \int_\Omega W(\frac{z'}{\varepsilon}, \nabla_\delta \psi_\delta(z)) \d z\,,
\end{align}
for the rescaled deformation $\psi_\delta(z',z_3)\coloneqq \phi_\delta(z',\delta z_3)$ and the rescaled gradient 
$\nabla_\delta \psi_\delta \coloneqq (\nabla'\psi_\delta, \frac1\delta \partial_3 \psi_\delta)$ with the in-plane component 
$\nabla'\psi_\delta = (\partial_1\psi_\delta, \partial_2\psi_\delta)$. In \cite{HoNeVe14}, the authors studied the $\Gamma$-limit of \eqref{eq:rescaled3dEnergy} for $\delta$ and $\varepsilon$ converging to zero, under the following assumptions.
Given $\varepsilon= \varepsilon(\delta)$ as a monotone function from $(0,\infty)$ to $(0,\infty)$ one assumes that 
$\frac{\delta}{\varepsilon(\delta)} \rightarrow \gamma$ as $\delta \rightarrow 0$ with $\gamma \in (0,\infty)$.
Furthermore, the energy density 
$W: \R^2\times \R^{3\times3}\rightarrow [0,\infty]$ is Borel measurable and $W(\cdot,F)$ is periodic for all $F\in \R^{3\times3}$ with
$W(y',\cdot)\in C^0(\R^{3\times3},[0,\infty])$ for almost every $y'\in \R^2$. Furthemore, $W(y',RF) = W(y',F)$ for all $F \in \R^{3\times3}$ and all $R\in SO(3)$ and there exist positive constants $c_1,c_2,\rho$ such that
\begin{align*}
&W(y',F) \geq c_1 \text{dist}^2(F,SO(3)) \quad \text{for all }F\in \R^{3\times3},\\
&W(y',F) \leq c_2 \text{dist}^2(F,SO(3)) \quad \text{for all }F\in \R^{3\times3} \text{ with } \text{dist}^2(F,SO(3)) \leq \rho.
\end{align*}
Finally, a quadratic form $Q^3(y,\cdot)$ is defined as quadratic expansion of $W(y,\cdot)$ at the identity, \ie
there exists a monotone function $r:[0,\infty) \rightarrow [0, \infty]$ with $r(t) \rightarrow 0$ as $t\rightarrow0$, such that, for almost every $y'\in \R^2$ 
\begin{align}\label{eq:quadraticExpansion}
|W(y',I_3+G) - Q^3(y',G)| \leq r(|G|)|G|^2 \quad \text{for all }G \in \R^{3\times3}
\end{align}
with $\vert G \vert$ for $G\in \R^{3\times3}$ denoting the Frobenius norm.

\textit{Example. Consider a thin elastic plate as displayed on the left-hand side of ~\cref{fig:sketch}, consisting of two materials (blue and orange), oscillating on the scale $\varepsilon$. In the case of an isotropic material law, $Q^3$ is given by
\begin{align*}
Q^3(y', G) &= \mu(y') \left|\tfrac{G+ G^\top}{2}\right|^2 + \frac{\lambda(y')}{2} \left| \textrm{tr}\left(G\right) \right|^2
\end{align*}
where the Lam\'e-Navier parameters $\mu(y')$ and $\lambda(y')$ are periodic functions with period cell being the unit square $Y'\coloneqq (0,1)^2$
with different values on the orange and blue phase, respectively.  
%In the example displayed on the left-hand side of ~\cref{fig:sketch}, while the blue material is softer, with elastic modulus $\frac{1}{10}E$ and the same Poisson ratio. $\mu(y')$ and $\lambda(y')$ then vary on the unit square, forming diagonal stripes of orange and blue material: 
%\begin{align*}
%D &\coloneqq \{y'\in Y'\,:\; \frac14 < |y_1| + |y_2| \leq \frac34 \text{ or }   \frac54 < |y_1| + |y_2| \leq \frac74 \}\\
%\mu(y')&\coloneqq \begin{cases}
%\mu& \quad,\, y'\in D \\
%\frac{1}{10} \mu& \quad,\, y' \in Y'\setminus D
%\end{cases} \\
%\lambda(y')&\coloneqq \begin{cases}
%\lambda& \quad,\, y'\in D \\
%\frac{1}{10} \lambda& \quad,\, y' \in Y' \setminus D \,.
%\end{cases}
%\end{align*}.
}

Based on $Q^3$ now define 
\begin{align}
\label{eq:Q2gamma1}
Q^{2,\gamma}:\R^{2\times2}_\text{sym} \rightarrow \R\,;\quad Q^{2,\gamma}(A) \coloneqq \inf_{\vartheta \in \mathcal{V}} \int_{\Yspace} Q^3(y',\iota(y_3A) + \nabla_\gamma \vartheta(y))\d y
\end{align}
with $\Yspace\coloneqq Y' \times Y_3$ denoting the fundamental cell in 3D for $Y_3  \coloneqq (-\frac12,\frac12)$, 
$$\iota(A) \coloneqq  \begin{pmatrix} A & \begin{array}{r}0\\0\end{array}\\0\quad 0&0 \end{pmatrix}\in \R^{3\times 3}\,, $$
for $A \in \R^{2\times 2}$, 
and $\nabla_\gamma \vartheta(y) \coloneqq \left(\nabla_{y'} \vartheta(y), \frac1\gamma \partial_3 \vartheta(y)\right). $
Here, the infimum in \eqref{eq:Q2gamma1} is taken over all $\vartheta \in \mathcal{V}$, with
\begin{align*}
\mathcal{V}\coloneqq \big\{&\vartheta \in W^{1,2}(\Yspace;\R^3) \,:\, \vartheta(y) = (By',0)^\top + \varphi(y)\,,\\&B\in \R^{2\times2}_\text{sym}\,,\;\varphi \in W^{1,2}(\Yspace;\R^3) \,,\;\varphi(y) \text{ periodic in }y'\,,\;\int_{\Yspace} \varphi \d y= 0\,\big\}\,.
\end{align*}

Hornung \etal proved  in \cite{HoNeVe14} under the above assumptions the $\Gamma$-convergence of the energy $\mathcal{W}^{\delta,\varepsilon(\delta)}$ \eqref{eq:rescaled3dEnergy}, with respect to the strong $L^2$-topology, to the homogenized elastic plate energy 
\begin{align}
\label{eq:homBendEnergy1}
\mathcal{W}_\text{hom}^\gamma [\psi] = \begin{cases}
\int_\omega Q^{2,\gamma}(D^2\psi(x) \cdot n[\psi](x))\d x\,&,\, \psi \in \mathcal{A},\\
+\infty\,&\text{,\, else}
\end{cases}
\end{align}
with $\mathcal{A} \coloneqq \left\{ \psi \in W^{2,2}(\omega;\R^3)\colon \, \nabla \psi^\top \nabla \psi = I_2 \text{ a.e.} \right\}$  is the set of all $W^{2,2}$-isometric immersions of $\omega$ in $\R^3$ and $-D^2\psi \cdot n[\psi] = - \left(\partial_i\partial_j\psi \cdot n[\psi]\right)_{ij}\in \R^{2\times2}_\text{sym}$ 
is the second fundamental form corresponding to $\psi\in W^{2,2}(\omega;\R^3)$, 
with the unit normal $ n[\psi] =\frac{1}{|\partial_1 \psi \times \partial_2 \psi|} \partial_1 \psi \times \partial_2 \psi\in\R^3$.

Note that this is a generalization of \cite{FrJaMue02b}, where the plate model with homogeneous material was derived. 
In \cite{HoNeVe14}, the case $\gamma = \infty$ was also discussed, resulting in a different quadratic form $Q_\infty$. 
Here, we confine ourselves to the case $\gamma\in(0,\infty)$.

%%%%%%%%%%%%%%%%%%%%%%%%%%%%%%%%%%%%%%%%%%%%%%%%%%%%%%%%%%%%%%
%%%%%%%%%%%%%%%%%%%%%%%%%%%%%%%%%%%%%%%%%%%%%%%%%%%%%%%%%%%%%%
%%%%%%%%%%%%%%%%%%%%%%%%%%%%%%%%%%%%%%%%%%%%%%%%%%%%%%%%%%%%%%
\section{Homogenized plate bending energy model}\label{sec:HomBend}
In this paper, we investigate a numerical approximation scheme for a homogenized bending plate model. In \cite{HoNeVe14}, as revisited in ~\cref{sec:HoNeVe14}, solely a macroscopically homogeneous microstructure was taken into account. 
In \cite{BoNePa23}, B\"ohnlein \etal extended this theory to the case of piecewise constant macroscopic dependence on grain domains. 
Here, we consider the more general case of a macroscopically varying microstructure. 
Introducing the macroscopic variable $x\in\omega$ we consider the linearized elasticity density
\begin{align}
Q^3: \omega \times Y' \times \R^{3\times3} \rightarrow \R;\; (x,y',G) \to Q^3(x,y',G)
\end{align}
on the fundamental cell $\Yspace$, which is assumed to be quadratic in the third argument. 
In particular, for fixed macroscopic position $x$, the material properties are constant in the $y_3$ direction and depend solely on $y'$. 

In fact, we consider the homogenized elastic energy 
\begin{align}
\label{eq:homBendEnergy}
\mathcal{W}^\gamma [\psi] = \begin{cases}
\int_\omega Q^{2,\gamma}(x,D^2\psi(x) \cdot n[\psi](x))\d x\,&,\, \psi \in \mathcal{A},\\
+\infty\,&\text{,\, else,}
\end{cases}
\end{align}
where $x\in\omega$ denotes the macroscopic variable,
% and 
%$-D^2\psi \cdot n[\psi] = - \left(\partial_i\partial_j\psi \cdot n[\psi]\right)_{ij}\in \R^{2\times2}_\text{sym}$ 
%is the second fundamental form corresponding to $\psi\in W^{2,2}(\omega;\R^3)$, 
%with the unit normal $ n[\psi] =\frac{1}{|\partial_1 \psi \times \partial_2 \psi|} \partial_1 \psi \times \partial_2 \psi\in\R^3$
%
%Here, $\gamma \in (0,\infty)$ is a constant depending on the plate thickness and
%the microscopic cell size and will be specified later. 
%
where we assume that $Q^{2,\gamma}\colon\omega\times\R^{2\times2}_\text{sym}\rightarrow \R$ results from a 
microscopic optimization problem rescaled to the fundamental cell $\Yspace$ similar to \eqref{eq:Q2gamma1} and reads as
\begin{align}
\label{eq:Q2gamma}
Q^{2,\gamma}(x,A) = \inf_{\vartheta \in \mathcal{V}} \int_{\Yspace} Q^3(x,y',\iota(y_3A) + \nabla_\gamma \vartheta(y))\d y\,,
\end{align}

This is illustrated on the right-hand side of ~\cref{fig:sketch} for  
a material with macroscopically varying energy density reflecting two phases (red and white). Here, 
\begin{align*}
Q^3(x, y', G) &= \mu(x, y') \left|\tfrac{G+ G^\top}{2}\right|^2 + \frac{\lambda(x, y')}{2} \left| \textrm{tr}\left(G\right) \right|^2
\end{align*}
is defined on $\omega \times Y' \times R^{3\times 3}$ 
with Lam\'e-Navier parameters $\mu(x,y')$ and $\lambda(x, y')$ defined as functions on $\omega \times Y'$.
%The Lam\'e-Navier parameters $\mu(x,y')$ and $\lambda(x, y')$ are functions defined on $\omega \times Y'$. In the example displayed on the right-hand side of ~\cref{fig:sketch}, assume that the white material has elastic modulus $E$ and Poisson ratio $\nu$, while the red material is softer, with elastic modulus $\frac{1}{10}E$ and the same Poisson ratio. Let
%\begin{align*}
%D(a,b)\coloneqq \big\{ y'\in Y' \;:\; & 
%1- \tfrac{b}{\sqrt{2}} \vert y'- (1,0)^\top \vert_1  < 1+ \tfrac{b}{\sqrt{2}}  \vee
%1- \tfrac{b}{\sqrt{2}} \vert y'\vert_1 < 1+ \tfrac{b}{\sqrt{2}} \vee \\
%& y_1 < \tfrac{a}{2}  \vee  y_1 > 1- \tfrac{a}{2} \vee 
%y_2 < \tfrac{a}{2}  \vee  y_2 > 1- \tfrac{a}{2}
%\big\}\,.
%\end{align*}
%%with thicknesses $a(x_1)=(1-x_1) \frac{2-\sqrt{3}}{2}$, $b(x_1)=x_1 \frac{2-\sqrt{3}}{2}$
%$\mu(x, y')$ and $\lambda(x, y')$ are then defined as 
%\begin{align*}
%&\mu(x, y')\coloneqq \begin{cases}
%\mu& \quad,\,y'\in D((1-x_1) \frac{2-\sqrt{3}}{2}, x_1 \frac{2-\sqrt{3}}{2}) \\
%\frac{1}{10} \mu& \quad,\, \text{else}\,,
%\end{cases} \\
%&\lambda(x, y')\coloneqq \begin{cases}
%\lambda& \quad,\, y'\in D((1-x_1) \frac{2-\sqrt{3}}{2}, x_1 \frac{2-\sqrt{3}}{2}) \\
%\frac{1}{10} \lambda& \quad,\, \text{else} \,.
%\end{cases}
%\end{align*}

Finally, given a force $f \in L^2(\omega;\R^3)$ and clamped boundary conditions on a set $\Gamma_D \subset \partial\omega$, with $\mathcal{H}^1(\Gamma_D) > 0$ we ask for a minimizer of the homogenized and dimension reduced total free energy
\begin{align}\label{eq:fullEnergy}
{E} [\psi] = \begin{cases}
\mathcal{W}^\gamma[\psi] - \int_\omega f(x) \cdot \psi(x) \d x\,&,\, \psi \in \mathcal{A}^\text{BC}\\ +\infty\,&,\,\text{else}
\end{cases}
\end{align}
with constraint set 
$\mathcal{A}^\text{BC} \coloneqq \left\{ \psi \in \mathcal{A} :  \psi = \phi\,,\;\nabla\psi = \nabla\phi \text{ on }\Gamma_D \right\}$,
for some fixed $\phi \in \mathcal{A}$ representing the clamped boundary conditions.  In particular $\phi$ itself is in $\mathcal{A}^\text{BC}$.

%%%%%%%%%
%%%%%%%%%
%%%%%%%%%
%%%%%%%%%
%%%%%%%%%
%%%%%%%%%

\paragraph{A quadratic bending energy}
In what follows,  we rewrite the bending energy \eqref{eq:fullEnergy}.
As it was proved in \cite{BoNePa23}, by Poincare's and Korn's inequality using the $L^2$-orthogonality of $\iota(y_3A)$ and $ \nabla_\gamma \vartheta$ for $\vartheta \in \mathcal{V}$, for every $A \in \R^{2\times2}_\text{sym}$, and $x\in\omega$, there exists a unique $\vartheta(x,A,\cdot) \in \mathcal{V}$ such that
\begin{align}
\label{eq:Qgammavartheta}
Q^{2,\gamma}(x,A) = \int_{\Yspace} Q^3(x,y',\iota(y_3A) + \nabla_\gamma \vartheta(x,A,y))\d y \,.
\end{align}
This solution $\vartheta(x,A,\cdot)$ solves the Euler-Lagrange equation
\begin{align}
\label{eq:ELeq}
\int_{\Yspace} \left( \mathcal{C}^{3}(x,y')\nabla_\gamma \vartheta(y)\right): \nabla_\gamma v(y) \d y 
=- \int_{\Yspace}\left(\mathcal{C}^{3}(x,y')\iota(y_3A)\right): \nabla_\gamma v(y)\d y 
\end{align}
for all $v \in \mathcal{V}$, where $\mathcal{C}^3(x,y') \in \R^{3\times3\times3\times3}$ denotes 
the linearized elasticity tensor associated with the quadratic form $Q^3(x,y',\cdot)$
for $(x,y')\in \omega \times Y$, i.e.
\begin{align}
\label{eq:tensorDef}
\left( \mathcal{C}^{3}(x,y')(\iota(y_3A) + \nabla_\gamma \vartheta)\right):(\iota(y_3A) + \nabla_\gamma \vartheta) \coloneqq Q^3(x,y',\iota(y_3A) + \nabla_\gamma \vartheta)
\end{align}
for all $A\in\R^{2\times2}_\text{sym}$,  $\vartheta\in \mathcal{V}$. From its definition the symmetries 
$\mathcal{C}^3_{ijkl} = \mathcal{C}^3_{jikl} = \mathcal{C}^3_{ijlk}  = \mathcal{C}^3_{klij}$ for all $i,j,k,l\in\{1,2,3\}$
are deduced. 

Since $Q^3(x,y',\cdot)$ is quadratic, the solution $\vartheta(x,A,\cdot)$ of the Euler-Lagrange is linear in $A$ for every $x\in \omega$. Hence, $Q^{2,\gamma}(x,\cdot)$ is a quadratic form on $\R_\text{sym}^{2\times2}$ for every $x\in\omega$.
Furthermore, there are constants $\alpha,\; \beta \ge 0$ such that 
\begin{align} \label{eq:alphaBeta}
\frac{\alpha}{12} |A|^2 \leq Q^{2,\gamma}(x,A) \leq \frac{\beta}{12}|A|^2
\end{align}
for all $A\in \R_\text{sym}^{2\times2}$, see \cite{HoNeVe14}.

Using the isometry constraint $\nabla \psi^\top \nabla \psi = I_2$ one can simplify the nonlinear term 
$D^2\psi \cdot n[\psi]$ in \eqref{eq:fullEnergy}. 
A similar argument was presented in \cite{Ba15}, and adapted in \cite{RuSiSm22} for the case of isometric deformations of shells. In \cite[Proposition 1]{BoGuNo22}, an analogous result for a specific energy density in the prestrained case was derived using Christoffel symbol calculus.
\begin{proposition}
Let $\psi \in \mathcal{A}$. Then we have the identity
\begin{align*}
Q^{2,\gamma}(x, D^2\psi(x) \cdot n[\psi](x)) = \sum_{m=1}^3 Q^{2,\gamma}(x, D^2\psi_m(x))\,.
\end{align*}
\end{proposition}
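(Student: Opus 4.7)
The plan is to exploit the pointwise fact that for an isometric immersion the second derivatives of $\psi$ are pointwise normal to the surface, combined with the quadratic (hence $2$-homogeneous) nature of $Q^{2,\gamma}(x,\cdot)$. Everything below is pointwise in $x\in\omega$, so I suppress the $x$-dependence.

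First I would differentiate the isometry relation $\partial_k\psi\cdot\partial_l\psi=\delta_{kl}$ to get $\partial_i\partial_k\psi\cdot\partial_l\psi+\partial_k\psi\cdot\partial_i\partial_l\psi=0$. Setting $a_{ikl}\coloneqq \partial_i\partial_k\psi\cdot\partial_l\psi$, this gives $a_{ikl}=-a_{ilk}$, while the equality of mixed partials yields $a_{ikl}=a_{kil}$. The standard three-line juggling
\begin{align*}
a_{ikl}=a_{kil}=-a_{kli}=-a_{lki}=a_{lik}=a_{ilk}=-a_{ikl}
\end{align*}
forces $a_{ikl}=0$ for all $i,k,l$. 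Hence $\partial_i\partial_k\psi$ is orthogonal to both $\partial_1\psi$ and $\partial_2\psi$ and therefore parallel to $n[\psi]$, which yields the decomposition
\begin{align*}
\partial_i\partial_k\psi=\bigl(\partial_i\partial_k\psi\cdot n[\psi]\bigr)\,n[\psi]=\bigl(D^2\psi\cdot n[\psi]\bigr)_{ik}\,n[\psi].
\end{align*}

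Reading off the $m$-th component gives $D^2\psi_m=n[\psi]_m\,\bigl(D^2\psi\cdot n[\psi]\bigr)$ as an element of $\R^{2\times 2}_\text{sym}$, for each $m\in\{1,2,3\}$. Now I would invoke the homogeneity $Q^{2,\gamma}(x,\lambda A)=\lambda^2 Q^{2,\gamma}(x,A)$, which follows from the established fact (recalled just before the proposition) that $Q^{2,\gamma}(x,\cdot)$ is a quadratic form on $\R^{2\times 2}_\text{sym}$. This gives
\begin{align*}
Q^{2,\gamma}\bigl(x,D^2\psi_m\bigr)=n[\psi]_m^{\,2}\,Q^{2,\gamma}\bigl(x,D^2\psi\cdot n[\psi]\bigr).
\end{align*}

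Summing over $m=1,2,3$ and using $|n[\psi]|=1$, i.e.\ $\sum_m n[\psi]_m^{\,2}=1$, produces the claimed identity. The only delicate point is justifying the algebraic identity $a_{ikl}=0$, but this is a textbook consequence of combining symmetry in $(i,k)$ with antisymmetry in $(k,l)$; no regularity beyond $\psi\in W^{2,2}(\omega;\R^3)$ is needed since the computation is pointwise a.e.\ and the isometry constraint holds a.e.\ in $\omega$. I do not anticipate any real obstacle beyond setting up this decomposition carefully.
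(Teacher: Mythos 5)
Your proof is correct and takes essentially the same approach as the paper: both differentiate the isometry constraint to show that $\partial_i\partial_j\psi$ is parallel to $n[\psi]$, and both then exploit the quadratic structure of $Q^{2,\gamma}(x,\cdot)$ together with $|n[\psi]|=1$. The only difference is cosmetic: the paper expands $Q^{2,\gamma}$ via the tensor $\mathcal{C}^{2,\gamma}$ and reorders the sums, whereas you invoke the $2$-homogeneity directly and then sum $n[\psi]_m^2$ over $m$.
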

\begin{proof}
In the following, we write $n= n[\psi]$ for the sake of simplicity.
Differentiation of $\partial_{i}\psi\cdot \partial_{i}\psi = 1$ for $i \in \{1,2\}$ in direction $j \in \{1,2\}$ yields
$\partial_j \partial_{i}\psi\cdot \partial_{i}\psi  =  0$.
Similarly, differentiation of $\partial_{i}\psi\cdot \partial_{j}\psi=0$ for $j \neq i$ in direction $i \in \{1,2\}$ gives
$ \partial_i^2 \psi \cdot \partial_{j}\psi + \partial_j \partial_{i}\psi\cdot \partial_{i}\psi = 0\,.$
Altogether, using that the parameter domain is two-dimensional, we obtain
$
\partial_i \partial_j \psi \cdot \partial_k \psi = 0 \ \ \forall i,j,k\in \{1,2\}.
$
Now, writing $\partial_i \partial_j \psi $ in terms of the orthonormal basis $\{ \partial_1 \psi, \partial_2\psi, n\}$, the identity
$
\partial_i \partial_j \psi = (\partial_i \partial_j \psi \cdot n)n+ (\partial_i \partial_j \psi \cdot \partial_1 \psi)\partial_1 \psi + (\partial_i \partial_j \psi \cdot \partial_2 \psi)\partial_2 \psi = (\partial_i \partial_j \psi \cdot n)n
$ holds.\\

For $x\in \omega$, since $Q^{2,\gamma}(x,\cdot)$ is quadratic, there exists a tensor $\mathcal{C}^{2,\gamma}(x) \in \R^{2\times2\times2\times2}$, such that for all $A\in\R^{2\times2}$
$$Q^{2,\gamma}(x,A) = \sum_{i,j,k,l=1}^2\mathcal{C}^{2,\gamma}_{ijkl}(x)A_{ij}A_{kl}\,.$$
Since $|n| = 1$ we can write
\begin{align*}
Q^{2,\gamma}(x,D^2\psi \cdot n) =& \sum_{i,j,k,l=1}^2\mathcal{C}^{2,\gamma}_{ijkl}(x)(\partial_i\partial_j \psi \cdot n)(\partial_k\partial_l \psi \cdot n)|n|^2 \\
=& \sum_{i,j,k,l=1}^2\mathcal{C}^{2,\gamma}_{ijkl}(x)\sum_{m=1}^3(\partial_i\partial_j \psi \cdot n)n_m(\partial_k\partial_l \psi \cdot n)n_m \\
=&\sum_{m=1}^3 \sum_{i,j,k,l=1}^2\mathcal{C}^{2,\gamma}_{ijkl}(x)\partial_i\partial_j \psi_m\partial_k\partial_l \psi_m = \sum_{m=1}^3 Q^{2,\gamma}(x, D^2\psi_m)\,.
\end{align*}
\end{proof}
In the following, with a slight misuse of notation, we write $Q^{2,\gamma}(x,D^2\psi)$ instead of\\ $\sum_{m=1}^3 Q^{2,\gamma}(x, D^2\psi_m)$. 
Hence the total free energy \eqref{eq:fullEnergy} reads as

\begin{align}
\label{eq:fullfullEnergy}
{E} [\psi] = \begin{cases}
\int_\omega Q^{2,\gamma}(x,D^2\psi(x) )\d x - \int_\omega f(x) \cdot \psi(x) \d x\,&,\, \psi \in \mathcal{A}^\text{BC},\\
+\infty\,&,\,\text{else.}
\end{cases}
\end{align}

Since $Q^{2,\gamma}(x,\cdot)$ is convex, by the lower order isometry constraint, 
the existence of minimizers for clamped boundary conditions induced by 
$\phi\in \mathcal{A}$ and force $f\in L^2(\omega;\R^3)$ follows by the direct method.

%%%%%%%%%%%%%%%%%%%%%%%%%%%%%%%%%%%%%%%%%%%%%%%%%%%%%%%%%%%%%%
%%%%%%%%%%%%%%%%%%%%%%%%%%%%%%%%%%%%%%%%%%%%%%%%%%%%%%%%%%%%%%
%%%%%%%%%%%%%%%%%%%%%%%%%%%%%%%%%%%%%%%%%%%%%%%%%%%%%%%%%%%%%%
\section{Discretization of the microscopic problem}\label{sec:ApproxHomTensor}
In this section, we investigate the finite element approximation of the microscopic minimization problem \eqref{eq:Q2gamma}. 
In what follows, we will use generic constants in the estimates.  
Let $\hmesh$ be a regular hexahedral mesh partition of $\Yspace$, with cells $\hcell \in\hmesh$ and define the finite element space
\begin{align*}
\mathcal{V}_h \coloneqq \big\{ &\vartheta_h \in C^0(\Yspace;\R^3)\,:\; 
\vartheta_h(y ) = (By',0)^\top + \varphi_h(y)\,,\\&B\in \R^{2\times2}_\text{sym}\,,\;\varphi_h|_\hcell \text{ multi-affine } 
\forall \hcell\in\hmesh \,,\;\varphi_h(y) \text{ $Y$-periodic in }y'\,,\int_{\Yspace} \varphi_h = 0\d y\big\}.
\end{align*}
On every $\hcell\in\hmesh$ we consider a numerical quadrature scheme with quadrature points 
$\hQuadPi= (\hQuadPi',\hQuadP_{\hcell,3,i})\in \hcell, \,i=1\dots k$, and weights $\hQuadW_i >0$, which is exact on tri-affine functions.
Furthermore, we assume that the set of quadrature points $\{\hQuadPi\}_{i=1,\ldots,k}$ is unisolvent 
with respect to the set of tri-affine functions in the sense of \cite[Section 2.3]{Ci78}. 
Actually, we consider a quadrature scheme defined on a reference cell and transferred by the tri-affine reference map to the actual cell. Then, for $A\in \R^{2\times2}_\text{sym}$, and for a macroscopic position $x\in\omega$ we make use of this quadrature and define the discrete quadratic form 
\begin{align}
\label{eq:Q_hinf}
Q_h^{2,\gamma}(x,A) \coloneqq 
\inf_{\theta_h \in \mathcal{V}_h}\Big( \sum_{\hcell\in\hmesh} |\hcell|
\sum_{i=1}^k \hQuadW_iQ^3(x,\hQuadPi',\iota(\hQuadP_{\hcell,3,i} A) + \nabla_\gamma \theta_h(\hQuadPi)) \Big)
\end{align}
as the discrete counterpart of \eqref{eq:Q2gamma}.
 Let us remark that microscopic approximation error $Q_h^{2,\gamma}(x,A)- Q^{2,\gamma}(x,A)$ is numerically evaluated and referenced in the numerical estimates 
\cref{eq:TotalErr} solely at macroscopic quadrature points (cf.~\cref{sec:DKTapprox}).

The above assumptions on the exactness of the quadrature scheme and the unisolvent property of the set of quadrature nodes implies that
\begin{align} \label{eq:alphaBetah}
\frac{\alpha}{12} |A|^2 \leq Q^{2,\gamma}_h(x,A) \leq \frac{\beta}{12}|A|^2
\end{align}
for all $A\in \R_\text{sym}^{2\times2}$, using the strong coercivity of  $Q^3(x,y',\cdot)$.
%\ie. 
%\begin{align}
%\label{eq:microquadratureRule}
%\frac{1}{|\hcell|} \int_\hcell \varphi(y) \d y = \sum_{i=1}^k \hQuadW_i \varphi(\hQuadPi) \qquad \text{for all }\varphi \text{ multi-affine}\,.
%\end{align}

%In what follows, let us assume that the microscopic quadrature rule is exact on constant functions. 
This ensures that there exists a unique $\vartheta_h(x,A,\cdot)\in \mathcal{V}_h$ with
\begin{align}
\label{eq:Qgamma_hvartheta_h}
Q^{2,\gamma}_h(x,A)=\sum_{\hcell\in\hmesh} |\hcell|\sum_{i=1}^k \hQuadW_iQ^3(x,\hQuadPi',\iota(\hQuadP_{\hcell,3,i} A) 
+ \nabla_\gamma \vartheta_h(x,\hQuadPi,A)) \,,\end{align}
which solves the linear system
\begin{align}
&\sum_{\hcell\in\hmesh} |\hcell|\sum_{i = 1}^k \hQuadW_i
\left( \mathcal{C}^3(x,\hQuadPi') \nabla_\gamma \vartheta_h(x,\hQuadPi,A)\right):  \nabla_\gamma v_h(\hQuadPi) \nonumber \\
&=-  \sum_{\hcell\in\hmesh} |\hcell|\sum_{i=1}^k\hQuadW_i \left( \mathcal{C}^3(x,\hQuadPi')\iota(\hQuadP_{\hcell,3,i}A)\right):  \nabla_\gamma v_h(\hQuadPi) \label{eq:ELeq_h}
\end{align}
with $\hQuadPi = (\hQuadPi',\hQuadP_{\hcell,3,i})$ and for all $v_h\in\mathcal{V}_h$.

The following proposition gives estimates for the finite element approximation of the microscopic problem:
\begin{proposition}
\label{prop:EHMM}
Let $A \in \R^{2\times2}_\text{sym}$ and $x\in\omega$ and suppose that the elasticity tensor $\mathcal{C}^3$ is bounded in  
$W^{1,\infty}(\omega \times Y', \R^{3\times3\times3\times3}))$. Furthermore, we assume that the quadrature scheme is exact on the set tri-affine functions and unisolvent with respect to this set.
Then for $\vartheta(x,A,\cdot) \in \mathcal{V}$ and $\vartheta_h(x,A,\cdot)\in\mathcal{V}_h$ denoting the solutions to \eqref{eq:ELeq} and \eqref{eq:ELeq_h}, respectively, 
there exists a constant $C>0$ depending on $\gamma$ and on $\mathcal{C}^3$, such that
\begin{align}\label{eq:ErrorHMM}
\norm{\vartheta_h(x,A,\cdot) - \vartheta(x,A,\cdot)}_{W^{1,2}(\Yspace;\R^3)} \!\leq\!  C  h |A|\,, \;
\left| Q_h^{2,\gamma}(x,A) - Q^{2,\gamma}(x,A)  \right| \!\leq\!  C h |A|^2\,.
\end{align} 
If the quadrature in the definition of $Q_h^{2,\gamma}(x,A)$ (cf.~\eqref{eq:Q_hinf}) is exact, i.e.
\begin{align}
\label{eq:ConsistencyCondition}
\int_\hcell \left(\mathcal{C}^3(x,y') \nabla u_h(y) \right):\nabla v_h(y) \d y=  |\hcell|\sum_{i=1}^k\hQuadW_i\left( \mathcal{C}^3(x,\hQuadPi')\nabla u_h(\hQuadPi)\right): \nabla v_h(\hQuadPi)\,,
\end{align}
for all  $u_h,v_h \in \mathcal{V}_h$ and all $x\in \omega$, then 
$\left| Q_h^{2,\gamma}(x,A) - Q^{2,\gamma}(x,A)  \right| \leq  C h^2 |A|^2\,$.
\end{proposition}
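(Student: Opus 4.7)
My plan is to treat the Euler--Lagrange equations \eqref{eq:ELeq} and \eqref{eq:ELeq_h} as the continuous and discrete linear problems associated with the bilinear forms $a(u,v) := \int_\Yspace (\mathcal{C}^3(x,y') \nabla_\gamma u):\nabla_\gamma v \d y$ on $\mathcal{V}$ and its quadrature-based discrete analogue $a_h$ on $\mathcal{V}_h$, together with the right-hand sides $\ell(v;A) := -\int_\Yspace (\mathcal{C}^3(x,y') \iota(y_3 A)):\nabla_\gamma v \d y$ and $\ell_h$. Both $a$ and $a_h$ are uniformly coercive with essentially the same constant; for $a_h$ this follows from the exactness of the quadrature on tri-affine functions combined with unisolvence, which already underpins \eqref{eq:alphaBetah}. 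This places the $W^{1,2}$-error in the classical framework of the first Strang lemma:
\begin{align*}
\norm{\vartheta - \vartheta_h}_{W^{1,2}(\Yspace;\R^3)} \leq C \inf_{v_h \in \mathcal{V}_h} \Bigl( \norm{\vartheta - v_h}_{W^{1,2}} + \sup_{w_h \in \mathcal{V}_h \setminus \{0\}} \frac{|a(v_h, w_h) - a_h(v_h, w_h)|}{\norm{w_h}_{W^{1,2}}} \Bigr) + C \sup_{w_h \neq 0} \frac{|\ell(w_h;A) - \ell_h(w_h;A)|}{\norm{w_h}_{W^{1,2}}}.
\end{align*}

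The best-approximation term is controlled by the standard multi-affine nodal interpolation estimate, provided $\vartheta(x,A,\cdot) \in H^2(\Yspace;\R^3)$. This regularity follows from standard periodic elliptic regularity for the cell problem using $\mathcal{C}^3 \in W^{1,\infty}$, and the linearity of \eqref{eq:ELeq} in $A$ gives the dependence $\norm{\vartheta}_{H^2} \leq C|A|$. The two quadrature consistency terms are treated by a Bramble--Hilbert argument on a reference cell: exactness on tri-affine shape functions together with the $W^{1,\infty}$-regularity of $\mathcal{C}^3$ yields contributions of order $h \norm{v_h}_{H^1}\norm{w_h}_{H^1}$ and $h |A| \norm{w_h}_{H^1}$, respectively. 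Combining these with the interpolation estimate yields $\norm{\vartheta - \vartheta_h}_{W^{1,2}} \leq Ch|A|$.

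For the energy estimate I split
\begin{align*}
Q_h^{2,\gamma}(x,A) - Q^{2,\gamma}(x,A) = \bigl[Q_h^{2,\gamma}(x,A) - \mathcal{F}(\vartheta_h)\bigr] + \bigl[\mathcal{F}(\vartheta_h) - Q^{2,\gamma}(x,A)\bigr],
\end{align*}
with $\mathcal{F}(\theta) := \int_\Yspace Q^3(x, y', \iota(y_3 A) + \nabla_\gamma \theta)\d y$ the exactly integrated energy. Since $\vartheta$ minimizes $\mathcal{F}$ on $\mathcal{V}$, Taylor expansion around $\vartheta$ yields $\mathcal{F}(\vartheta_h) - \mathcal{F}(\vartheta) = \tfrac12 a(\vartheta_h - \vartheta, \vartheta_h - \vartheta) \leq C\norm{\vartheta - \vartheta_h}_{W^{1,2}}^2 \leq C h^2 |A|^2$. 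The first bracket is a pure quadrature error on the fixed field $\vartheta_h$, which satisfies $\norm{\vartheta_h}_{H^1} \leq C|A|$ by testing \eqref{eq:ELeq_h} against itself and using \eqref{eq:alphaBetah}, so the same Bramble--Hilbert argument yields $Ch|A|^2$. Summing gives $|Q_h^{2,\gamma}(x,A) - Q^{2,\gamma}(x,A)| \leq Ch|A|^2$, with the quadrature term as the rate-limiting contribution.

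Under the consistency hypothesis \eqref{eq:ConsistencyCondition}, the quadrature reproduces $a$ exactly on $\mathcal{V}_h \times \mathcal{V}_h$, so $\vartheta_h$ is the Galerkin projection of $\vartheta$ in the $a$-inner product and the second bracket above remains $O(h^2|A|^2)$. To upgrade the first bracket I expand $Q^3(x,y',\iota(y_3A)+\nabla_\gamma\vartheta_h)$ into its three quadratic contributions: the $\nabla_\gamma\vartheta_h$--$\nabla_\gamma\vartheta_h$ part is integrated exactly by \eqref{eq:ConsistencyCondition}; the cross term is handled by writing $y_3 A$ as the relevant $2\times 2$ block of the gradient of an explicit tri-affine auxiliary field and invoking the same hypothesis on that lifted pair; and the purely $\iota$--$\iota$ part depends only on $\mathcal{C}^3$ and on $y_3^2 A$, whose quadrature error a sharper Bramble--Hilbert estimate places at $O(h^2|A|^2)$. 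The main obstacle I anticipate is precisely this bookkeeping: making \eqref{eq:ConsistencyCondition}, which is stated only for products of $\mathcal{V}_h$-gradients, absorb the $\iota(y_3A)$ contributions requires a careful structural identification of $y_3 A$ with an admissible gradient and a verification that no polynomial degree is lost — the other ingredients ($H^2$-regularity of $\vartheta$, Strang, and the Taylor identity) are essentially bookkeeping of the first part.
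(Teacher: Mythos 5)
Your argument coincides with the paper's proof in structure and in all essential estimates: the first Strang lemma with elliptic regularity of the cell problem ($\vartheta(x,A,\cdot)\in W^{2,2}$ with $\norm{\vartheta}_{W^{2,2}}\leq C|A|$) and Ciarlet-type consistency/approximation bounds give $\norm{\vartheta-\vartheta_h}_{W^{1,2}}\leq Ch|A|$, and your two-term decomposition of the energy error is the same as the paper's (your $\mathcal{F}(\vartheta_h)-Q^{2,\gamma}$ is their first term via exact quadratic expansion and the Euler--Lagrange equation, your $Q_h^{2,\gamma}-\mathcal{F}(\vartheta_h)$ is their pure quadrature-error term), yielding $O(h^2|A|^2)$ and $O(h|A|^2)$ respectively. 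Where you diverge is the $O(h^2)$ improvement under \eqref{eq:ConsistencyCondition}: the paper simply asserts that the quadrature term ``vanishes'' under this hypothesis, whereas you correctly notice that \eqref{eq:ConsistencyCondition} as stated only controls the $\nabla u_h:\nabla v_h$ products and leaves the cross term $\mathcal{C}^3\iota(y_3A):\nabla_\gamma\vartheta_h$ and the $\iota$--$\iota$ term unaccounted for; this is a genuine gap in the paper's stated argument and your caution is warranted. The specific repair you sketch, however, does not go through literally: a tri-affine auxiliary field with $2\times 2$ gradient block $y_3 A$, such as $u(y)=(y_3 Ay',0)^\top$, is not $y'$-periodic and so does not lie in $\mathcal{V}_h$, so \eqref{eq:ConsistencyCondition} cannot be invoked on the pair $(u,\vartheta_h)$; moreover $\mathcal{C}^3$ couples all components, so the unavoidable extra entries of $\nabla u$ (the $\partial_3 u'$ block equal to $Ay'$ plus whatever $y_3$-dependence you add) contaminate the cross term and cannot simply be discarded. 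A cleaner repair is either to state a slightly stronger exactness hypothesis covering all three products appearing in $Q^3(x,y',\iota(y_3A)+\nabla_\gamma\theta_h)$ (which is exactly what is verified for the $27$-point Gauss rule in the implementation), or to estimate the residual quadrature errors of the $\iota$-terms directly by a Bramble--Hilbert argument and accept $O(h^2|A|^2)$ rather than exact cancellation. One minor slip: $\mathcal{F}$ being quadratic with $D^2\mathcal{F}[v,v]=2a(v,v)$ gives $\mathcal{F}(\vartheta_h)-\mathcal{F}(\vartheta)=a(\vartheta_h-\vartheta,\vartheta_h-\vartheta)$, not $\tfrac12 a(\cdot,\cdot)$, but this does not affect the rate.
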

\begin{proof}
Let $A \in \R^{2\times2}_\text{sym}$ and $x\in\omega$ be arbitrary.
We define the bilinear forms for the left-hand side of \eqref{eq:ELeq}, \eqref{eq:ELeq_h}, respectively, 
\begin{align*}
a(x,\vartheta,v)& \coloneqq \int_{\Yspace}\left( \mathcal{C}^3(x,y')\nabla_\gamma \vartheta(y)\right): \nabla_\gamma v(y) \d y\,,\\
a_h(x,\vartheta_h,v_h)& \coloneqq \sum_{\hcell\in\hmesh} |\hcell|\sum_{i=1}^k\hQuadW_i
\left( \mathcal{C}^3(x,\hQuadPi')\nabla_\gamma \vartheta_h(\hQuadPi)\right): \nabla_\gamma v_h(\hQuadPi) \,,
\end{align*}
and, the linear forms  for the right-hand side of \eqref{eq:ELeq}, \eqref{eq:ELeq_h}, respectively,
\begin{align*}
\RightHandSide(x,v) &\coloneqq \int_{\Yspace} \left( \mathcal{C}^3(x,y')\iota(y_3A)\right): \nabla_\gamma v(y)\d y \,,\\
\RightHandSideh(x,v_h) &\coloneqq  \sum_{\hcell\in\hmesh} |\hcell|\sum_{i=1}^k\hQuadW_i\left( \mathcal{C}^3(x,\hQuadPi')\iota(\hQuadP_{\hcell,3,i}A)\right): \nabla_\gamma v_h(\hQuadPi) \,.
\end{align*}
By Strang's first lemma \cite[Theorem 4.1.1]{Ci78}, using the ellipticity property of $Q^3(x, \cdot)$, there exists a constant $C >0 $, such that
\begin{align*}
&\norm{\vartheta_h(x,A,\cdot) - \vartheta(x,A,\cdot)}_{W^{1,2}(\Yspace;\R^3)}\\ &\leq C\inf_{v_h \in \mathcal{V}_h}\left\{ \norm{v_h - \vartheta(x,A,\cdot)}_{W^{1,2}(\Yspace;\R^3)} + \sup_{u_h \in \mathcal{V}_h} \frac{\left| a(x,v_h,u_h) - a_h(x,v_h,u_h) \right|}{\norm{u_h}_{W^{1,2}(\Yspace;\R^3)}} \right\}
\\ & \quad + C \sup_{u_h \in \mathcal{V}_h}\frac{\left| \RightHandSide(x,u_h) - \RightHandSideh(x,u_h) \right|}{\norm{u_h}_{W^{1,2}(\Yspace;\R^3)}}\,.
\end{align*}
Classical elliptic regularity theory \cite[Theorem 8.12]{GiTr83}, adapted to linearized elasticity on the domain $Y=Y'\times Y_3$ with periodic boundary conditions on $\partial Y' \times Y_3$
and Neumann boundary conditions on $Y' \times \{-\tfrac12,\tfrac12\}$ implies 
$\vartheta(x,A,\cdot) \in W^{2,2}(\Yspace;\R^3)$. 
Furthermore, by the linearity of $\RightHandSide$ one gets
\begin{align} \label{eq:LinearOnA}
\norm{\vartheta}_{W^{2,2}(\Yspace;\R^3)} \leq C |A|\,.
\end{align}
By the Sobolev embedding theorem the Lagrangian interpolation $\mathcal{I}_h:W^{2,2}(\Yspace;\R^3)\rightarrow \mathcal{V}_h$ 
based on nodal interpolation is well-defined. Making use of the regularity of $\hmesh$  the Bramble-Hilbert-Lemma \cite{Br07}, and \eqref{eq:LinearOnA} imply
\begin{align}
\label{eq:ApproximationError}
\norm{\mathcal{I}_h[\vartheta(x,A,\cdot)] - \vartheta(x,A,\cdot)}_{W^{1,2}(\Yspace)} \leq Ch\norm{ \vartheta(x,A,\cdot)}_{W^{2,2}(\Yspace;\R^3)}  \leq C h|A|\,.
\end{align}
Furthermore, applying the consistency error estimate in \cite[Theorem 4.1.4 and Theorem 4.1.5´]{Ci78} involving the assumption on the numerical quadrature,
and \eqref{eq:LinearOnA} one obtains
\begin{align}
\label{eq:ConsistencyError1}
&\sup_{u_h \in \mathcal{V}_h} \frac{\left| (a-a_h)(x,\mathcal{I}_h[\vartheta(x,A,\cdot)],u_h) \right|}{\norm{u_h}_{W^{1,2}(\Yspace;\R^3)}}\leq  C  h \norm{ \vartheta(x,A,\cdot)}_{W^{2,2}(\Yspace;\R^3)} \leq C h \vert A \vert\,,\\
\label{eq:ConsistencyError2}
&\sup_{u_h \in \mathcal{V}_h}\frac{\left| (\RightHandSide-\RightHandSideh)(x,u_h)  \right|}{\norm{u_h}_{W^{1,2}(\Yspace;\R^3)}} \leq C h|A| \,,
\end{align}

Using the regularity \eqref{eq:LinearOnA} of the microscopic solution combined with the approximation error \eqref{eq:ApproximationError} and the consistency errors \eqref{eq:ConsistencyError1}, \eqref{eq:ConsistencyError2}, the first Strang Lemma ensures the first estimate in \eqref{eq:ErrorHMM}.

With respect to the second estimate in \eqref{eq:ErrorHMM}, we get with the notation $\vartheta = \vartheta(x,A,\cdot)$ and  $\vartheta_h = \vartheta_h(x,A,\cdot)$
\begin{align*}
&\left| Q^{2,\gamma}(x,A) - Q^{2,\gamma}_h(x,A) \right|\\
%
%&=\left|\int_{\Yspace}Q^3(x,y',\iota(y_3 A) + \nabla_\gamma \vartheta)\d y  -\sum_{\hcell\in\hmesh} |\hcell|\sum_{i=1}^k\hQuadW_iQ^3(x,\hQuadPi',\iota(\hQuadP_{\hcell,3,i} A) + \nabla_\gamma \vartheta_h(\hQuadPi))\right| \\
%
& \leq \left|\int_{\Yspace}Q^3(x,y',\iota(y_3 A) + \nabla_\gamma \vartheta(y))-Q^3(x,y',\iota(y_3 A) + \nabla_\gamma \vartheta_h(y))\d y \right|\\ 
& \quad + \left| \int_{\Yspace}Q^3(x,y',\iota(y_3 A) + \nabla_\gamma \vartheta_h(y))\d y-\sum_{\hcell\in\hmesh} |\hcell|\sum_{i=1}^k\hQuadW_iQ^3(x,\hQuadPi',\iota(\hQuadP_{\hcell,3,i} A) + \nabla_\gamma \vartheta_h(\hQuadPi))\right|\\
\end{align*}
The first term on the right-hand side can be estimated as follows
\begin{align*}
&\left|\int_{\Yspace}Q^3(x,y',\iota(y_3 A) + \nabla_\gamma \vartheta + ( \nabla_\gamma \vartheta_h -  \nabla_\gamma \vartheta) ) -Q^3(x,y',\iota(y_3 A) + \nabla_\gamma \vartheta)\d y \right|\\
&= \left|\int_{\Yspace}2 \mathcal{C}^3(x,y') (\iota(y_3A) \!+\! \nabla_\gamma \vartheta)\!:\!(\nabla_\gamma \vartheta_h \!-\!  \nabla_\gamma \vartheta) 
\!+\! \mathcal{C}^3(x,y') (\nabla_\gamma \vartheta_h \!-\! \nabla_\gamma \vartheta)\!:\!(\nabla_\gamma \vartheta_h \!-\!  \nabla_\gamma \vartheta) \d y \right|\\ 
&= 0 + \left|\int_{\Yspace} \mathcal{C}^3(x,y') (\nabla_\gamma \vartheta_h - \nabla \vartheta):(\nabla_\gamma \vartheta_h -  \nabla_\gamma \vartheta) \d y \right|\\
& \leq C \norm{\vartheta_h(x,A,\cdot)- \vartheta(x,A,\cdot)}^2_{W^{1,2}(\Yspace;\R^3)} \leq C h^2 \vert A \vert^2\,.
\end{align*}
Here,  the second equality follows from the Euler-Lagrange equation \eqref{eq:ELeq}.
Applying the general theory for quadrature errors in \cite{Ci78} and taking into account the regularity of $\mathcal{C}^3$, the second term can be estimated by 
$C h \Vert \mathcal{C}^3\Vert_{L^\infty(\omega, W^{1,\infty}(Y, \R^{2\times2\times2\times2}))} \vert A \vert^2$, which is bounded by  $C h \vert A \vert^2$, 
and vanishes under the consistency assumption \eqref{eq:ConsistencyCondition}. This implies the claim.
\end{proof}
The assumption $\mathcal{C}^3\in W^{1,\infty}(\omega\times Y';\R^{2\times2\times2\times2})$ is crucial for this estimate: the Lipschitz continuity in the microscopic variable $y'$ ensures  a first order error estimate with respect to the microscopic grid size $h$ given the microscopic discretization for a given macroscopic $x\in \omega$. The Lipschitz continuity in the macroscopic variable $x$ then implies a uniform error bound of the macroscopic approximation error, later used in the actual two-scale error estimation. This regularity assumptions match with the assumptions in the literature on numerical homogenization for elliptic problems on volumetric domains, \cf \cite{Ab05}, \cite{Ab06}.

In typical engineering applications the material parameters are 
frequently piecewise constant on the microscale. In this less regular case one would loose the first order error estimate for the microscopic approximation error. 
This would prevent us from controlling the microscopic finite element error needed in the $\liminf$- estimate and in the recovery sequence estimate 
of our main result on $\Gamma$-convergence in \cref{thm:main}.
%Taking into account convolution based approximations of these discontinuous parameter functions would be at the price of an additional factor in the estimates scaling with the inverse of the width of the convolution kernel.

%%%%%%%%%%%%%%%%%%%%%%%%%%%%%%%%%%%%%%%%%%%%%%%%%%%%%%%%%%%%%%
%%%%%%%%%%%%%%%%%%%%%%%%%%%%%%%%%%%%%%%%%%%%%%%%%%%%%%%%%%%%%%
%%%%%%%%%%%%%%%%%%%%%%%%%%%%%%%%%%%%%%%%%%%%%%%%%%%%%%%%%%%%%%
\section{Discretization of the macroscopic problem}\label{sec:DKTapprox}
In this section, we numerically approximate the energy \eqref{eq:fullfullEnergy}. 
To this end, we will derive a non-conforming finite element discretization for the two-scale problem and the corresponding discrete
isometry constraint for the macroscopic deformation.
First, let us review the non-conforming finite element approximation based on the Discrete Kirchhoff Triangle (DKT).
For simplicity, we directly assume that $\omega$ is a polygonal domain and $\Gamma_D$ is a union of edges of its boundary.
Let $\Hmesh$ be a regular triangulation of $\omega$ with maximal triangle diameter $H>0$.
We denote by $\mathcal{N}_H$ the set of vertices and by $\mathcal{E}_H$ the set of edges.
For $k\in \mathbb{N}$, we denote by $\mathcal{P}_k$ the set of polynomials of degree at most $k$.
For vertices $z_1,z_2,z_3\in \mathcal{N}_H$ of a triangle $\Hcell$ we define $z_\Hcell = (z_1+z_2+z_3)/3$ as the center of mass of $\Hcell$
and introduce the reduced space of cubic polynomials
\begin{align*}
\mathcal{P}_{3,\text{red}}(\Hcell) \coloneqq \left\{ p\in \mathcal{P}_3(\Hcell) \; \Big\vert \; 6p(z_\Hcell)
= \sum_{i = 1,2,3}\left( 2p(z_i) - \nabla p(z_i)\cdot (z_i - z_\Hcell) \right) \right\}
\end{align*}
which still has $\mathcal{P}_{2}(\Hcell)$ as a subspace and the finite element spaces
\begin{align*}
{\bf{W}}_H \coloneqq& \left\{ w_H \in C(\bar{\omega}) \; \vert \; w_H \restr{\Hcell} \in \mathcal{P}_{3,\text{red}}(\Hcell) \text{ for all }\Hcell \in \Hmesh  \text{ and }\nabla w_H \text{ is continuous at }\mathcal{N}_H \right\}\,, \\
{\bf{\Theta}}_H \coloneqq& \left\{ \theta_H \in C(\bar{\omega};\R^2) \; \vert \; \theta_H\restr{\Hcell} \in \mathcal{P}_2(\Hcell)^2 \text{ and }\theta_H \cdot n_E\restr{E} \text{ is affine for all } E\in \mathcal{E}_h  \right\}\, .
\end{align*}
For a function $w \in W^{3,2}(\omega)$, the interpolation $w_H = \mathcal{I}^{DKT} w \in {\bf{W}}_H$ is defined on every triangle $\Hcell\in \Hmesh$ by
$w_H(z) = w(z)$ and $\nabla w_H(z) = \nabla w(z)$ for all vertices $z\in \mathcal{N}_H \cap \Hcell$,
which is well-defined due to the continuous embedding of $W^{3,2}(\omega)$ into $C^1(\bar \omega)$.
The discrete gradient operator $\theta_H \colon {\bf{W}}_H \to {\bf{\Theta}}_H$ is defined via
\begin{align*}
\theta_H[w_H](z) = \nabla w_H(z) \, ,  \quad \theta_H[ w_H](z_E)\cdot \tau_E = \nabla w_H(z_E) \cdot \tau_E
\end{align*}
for all vertices $z \in \mathcal{N}_H$, all edges $E\in \mathcal{E}_H$ with $\tau_E$ denoting a unit tangent vector on $E$, and $z_E$ the midpoint of $E$.
We use superscripts $(\theta^j_H[w_H])_{j=1,2}$
to indicate the components of $\theta_H[w_H]$.
The operator $\theta_H$ can analogously be defined on $W^{3,2}(\omega)$.
This operator has the following properties (cf.~\cite{Ba11}):

There exists constants $c_0,\,c_1,\,c_2, \,c_3 > 0$ such that for $\Hcell \in \Hmesh$ with $H=\mathrm{diam(\Hcell)}$,
$w\in W^{3,2}(\Hcell)$ and $w_H \in {\bf{W}}_H$
\begin{align}
\label{eq:interpolDKT}
&\Vert w-\mathcal{I}^{DKT} w \Vert_{W^{m,2}(\Hcell)}  \leq c_0 H^{3-m} \Vert w\Vert_{W^{3,2}(\Hcell)} \quad \text{for } m = 0,1,2,3 \,,\\
\label{eq:ia}
& c_1^{-1} \lznormT{D^{k+1}w_H} \leq \lznormT{D^k\theta_H[ w_H]} \leq c_1 \lznormT{D^{k+1}w_H} \quad \text{for } k = 0,1\,, \\
\label{eq:ib}
&\lznormT{\theta_H[ w_H] - \nabla w_H} \leq c_2 H \lznormT{D^2 w_H}\,,\\
\label{eq:ic}
&\lznormT{\theta_H[ w] - \nabla w} + H \lznormT{\nabla \theta_H[ w] - D^2 w} \leq c_3 H_T^2 \norm{w}_{W^{3,2}(\Hcell)}\,.
\end{align}
Furthermore, the mapping $w_H \mapsto \lznormO{\nabla \theta_H[ w_H]}$ defines a norm on
$$\left\{ w_H \in {\bf{W}}_H  \; \vert \; \ w_H(z)=0,\ \nabla w_H(z) = 0 \text{ for all }z \in \mathcal{N}_H \cap \Gamma_D \right\}\,.$$
In our case of macroscopically varying microstructures we have to take into account the 
associated macroscopic quadrature error.
In fact, on every $\Hcell\in\Hmesh$ we use a
numerical quadrature scheme with quadrature points $\HQuadPi\in T$, $i = 1\dots l$ and weights $\HQuadW_i >0$, which is exact
on quadratic polynomials. 
The quadrature scheme is supposed to be defined on a reference triangle 
and transferred by the affine reference map to the actual triangle. 
Then, the associated discrete homogenized total free energy is given by
\begin{align}
\label{eq:discrE}
E^h_H[\psi_H] =\begin{cases} \sum_{\Hcell\in\Hmesh} \!|\Hcell|\sum_{i=1}^l\HQuadW_i 
\left(Q^{2,\gamma}_h(\HQuadPi, \nabla \theta_H[\psi_H](\HQuadPi)) \! -\! f(\HQuadPi) \cdot \psi_H(\HQuadPi)\right)\,,
\quad &\!\!\!\text{if }\psi_H \in \mathcal{A}^\text{BC}_H \\
+\infty\,,&\!\!\!\text{ else},
\end{cases}
\end{align}
with
\begin{align} \nonumber
\mathcal{A}^\text{BC}_H =\Big\{ \psi_H\in {\bf{W}}_H^3\, \Big\vert& \, \nabla \psi_H(z)^\top \nabla \psi_H(z) = I_2 \; \forall z\in \mathcal{N}_H; \; \\
&\psi_H(z)=\phi(z), \nabla \psi_H(z) = \nabla\phi(z)\; \forall z \in  \mathcal{N}_H \cap \Gamma_D \Big\}\,.
\label{eq:ABCH} 
\end{align}
Hence, the isometry property is only enforced on the nodes of the triangulation.
\bigskip 

We now state the main convergence result.
\begin{theorem}
Assume $(\mathcal{T}_H)_H$ is a family of regular triangulation of $\omega$
with grid size $H \rightarrow 0$, the microscopic quadrature rule is exact on the set of tri-affine functions and unisolvent with respect to this set,  the macroscopic quadrature rule is 
exact on quadratic polynomials, and 
$\mathcal{C}^3 \in W^{1,\infty}(\omega \times Y,\R^{3\times 3 \times 3 \times 3})$,  
$f \in W^{1,p}(\omega,\R^3)$, with $p > 2$.
Then, for every macroscopic grid size $H$ and every microscopic grid size  $h>0$ there exists a 
minimizer $\psi_H^{h}$ of the discrete homogenized total free energy $E^h_H[\cdot]$ (cf.~\eqref{eq:discrE}) in $\mathcal{A}^\text{BC}_H$. 
For any sequence $(\psi_H^{h})_{H,h}$ of such minimizers with $H,\; h  \rightarrow 0$ there is a subsequence, such that
with out reindexing $\psi_H^{h} \rightarrow \psi$ strongly in $W^{1,2}(\omega;\R^3)$ . 
Furthermore, $\psi\in \mathcal{A}^\text{BC}$ is a minimizer of the 
continuous total free genergy $E[\cdot]$ as defined in \eqref{eq:fullfullEnergy}.
\label{thm:main}
\end{theorem}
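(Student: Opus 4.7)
The plan is to establish the theorem as a standard consequence of equi-coercivity combined with $\Gamma$-convergence of the discrete energies $E_H^h$ to $E$, followed by the fundamental theorem of $\Gamma$-convergence to pass from discrete minimizers to a continuous minimizer. I would decompose the argument into (i) existence of discrete minimizers, (ii) a compactness/equi-coercivity statement identifying the limit of a sequence of bounded-energy configurations as an element of $\mathcal{A}^{\text{BC}}$, (iii) a $\Gamma$-liminf estimate, and (iv) construction of a recovery sequence.

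For existence of $\psi_H^h$, I would first observe that $\mathcal{A}_H^{\text{BC}}$ is non-empty, since the nodal interpolation $\mathcal{I}^{DKT}\phi$ of the clamped datum $\phi\in\mathcal{A}$ fulfills both the discrete isometry (isometry is tested only at nodes) and the boundary condition by construction. The constraint set is closed in the finite-dimensional space ${\bf W}_H^3$, and the integrand $Q_h^{2,\gamma}(\HQuadPi,\cdot)$ is continuous by the uniqueness of the microscopic corrector in \eqref{eq:Qgamma_hvartheta_h}. Lower bound \eqref{eq:alphaBetah}, the DKT norm equivalence \eqref{eq:ia}, and the $W^{1,p}$-regularity of $f$ combined with an $L^\infty$-bound on isometries with prescribed clamped values furnish coercivity with respect to the norm $\lznormO{\nabla\theta_H[\cdot]}$, yielding existence via the direct method.

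For compactness, I would apply \eqref{eq:alphaBetah} and \eqref{eq:ia} to an energy-bounded sequence $(\psi_H^h)$ to obtain a uniform bound on $D^2\psi_H^h$ in $L^2(\omega)$; together with the clamped boundary data this yields a uniform $W^{2,2}$-bound, so up to a subsequence $\psi_H^h\rightharpoonup\psi$ weakly in $W^{2,2}$ and strongly in $W^{1,2}$, and by \eqref{eq:ia}--\eqref{eq:ic} also $\theta_H[\psi_H^h]\to\nabla\psi$ in $L^2$ and $\nabla\theta_H[\psi_H^h]\rightharpoonup D^2\psi$ in $L^2$. The nodal isometry condition in \eqref{eq:ABCH} passes to the pointwise isometry a.e.\ as $H\to 0$ by a density-of-nodes plus uniform continuity argument (as in Bartels \cite{Ba11}), and the clamped boundary data is inherited by strong $W^{1,2}$-convergence, so $\psi\in\mathcal{A}^{\text{BC}}$. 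For the $\liminf$ inequality, I would split
\begin{align*}
E_H^h[\psi_H^h] = \sum_{\Hcell,i}|\Hcell|\HQuadW_i Q^{2,\gamma}(\HQuadPi,\nabla\theta_H[\psi_H^h](\HQuadPi)) + R_h + L_H^h,
\end{align*}
where $R_h$ collects the microscopic error $Q_h^{2,\gamma}-Q^{2,\gamma}$ (bounded by $C h \,|\nabla\theta_H[\psi_H^h]|^2$ by \cref{prop:EHMM} hence $o(1)$ in $L^1$ as $h\to 0$) and $L_H^h$ collects the force term plus the macroscopic quadrature error; the first sum is interpreted as a piecewise-constant $L^2$ integral and bounded below by $\int_\omega Q^{2,\gamma}(x,D^2\psi)\d x$ via weak lower semicontinuity of the convex quadratic form together with exactness of the macroscopic quadrature on quadratics and the Lipschitz dependence of $Q^{2,\gamma}$ on $x$.

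For the recovery sequence I would first invoke a density result (in the style of Pakzad/Hornung) to approximate $\psi\in\mathcal{A}^{\text{BC}}$ in $W^{2,2}$ by $\psi^\eta\in\mathcal{A}^{\text{BC}}\cap C^\infty(\overline\omega;\R^3)$ satisfying the clamped data, then set $\psi_H:=\mathcal{I}^{DKT}\psi^\eta\in\mathcal{A}_H^{\text{BC}}$, which is automatically in the discrete constraint set because nodal values and gradients coincide with those of $\psi^\eta$. The interpolation estimates \eqref{eq:interpolDKT} and \eqref{eq:ic} together with \cref{prop:EHMM} and the macroscopic quadrature error yield $\limsup E_H^h[\psi_H]\le E[\psi^\eta]$, and a standard diagonal argument in $\eta,H,h$ produces the desired recovery. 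Combining (i)--(iv) with the fundamental theorem of $\Gamma$-convergence then delivers both strong $W^{1,2}$-convergence of minimizers and the fact that the limit is a minimizer of $E$. The main obstacle I foresee is the recovery step: the nodal preservation of isometry by DKT is exact, but guaranteeing that the approximating smooth isometries $\psi^\eta$ exist with \emph{the prescribed clamped values} is delicate and will require a careful density argument for $W^{2,2}$-isometries with boundary data, which one can adapt from Hornung \cite{Ho17}; the microscopic error bound of \cref{prop:EHMM} is critical here, as it must be uniform in the macroscopic argument $x$, which is exactly why the hypothesis $\mathcal{C}^3\in W^{1,\infty}(\omega\times Y')$ is needed.
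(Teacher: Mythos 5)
Your overall architecture (existence $+$ compactness $+$ $\liminf$ $+$ recovery via a density result with a coupled diagonal argument) matches the paper's proof, including the key roles played by \cref{prop:EHMM}, the macroscopic quadrature error, the DKT interpolation estimates, and the coupling of $h,H$ with the density parameter. Two points, however, need repair.

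\textbf{Compactness is not extracted from a global $W^{2,2}$ bound.} You claim a ``uniform $W^{2,2}$-bound'' for $\psi_H^h$ and weak $W^{2,2}$ convergence. This cannot hold as stated: functions in ${\bf W}_H$ are globally $C^0$ with a gradient continuous only at the vertices $\mathcal{N}_H$, so in general $\psi_H^h\notin W^{2,2}(\omega)$; the quantity $D^2\psi_H^h$ is only defined element-wise, and a bound on $\lznormO{\nabla\theta_H[\psi_H^h]}$ (or on piecewise second derivatives) does not give global $W^{2,2}$ control of $\psi_H^h$. The correct route, as in the paper, is to use that $\theta_H[\psi_H^h]$ lies in the (globally $H^1$) space ${\bf\Theta}_H$, extract $\psi_H^h\rightharpoonup\psi$ in $W^{1,2}$ and $\theta_H[\psi_H^h]\rightharpoonup z$ in $W^{1,2}$, and then use \eqref{eq:ib} to identify $z=\nabla\psi$ and conclude $\psi\in W^{2,2}$. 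The weak lower semicontinuity in the $\liminf$ step must likewise be applied to $\nabla\theta_H[\psi_H^h]\rightharpoonup D^2\psi$ rather than to nonexistent second derivatives of $\psi_H^h$.

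\textbf{Density result.} The density of regular isometric immersions you invoke is \cite[Theorem 1]{Ho11} (producing $\psi^\rho\in W^{3,2}(\omega;\R^3)\cap\mathcal{A}$), not \cite{Ho17}, which concerns stationary points of plate functionals. Using $W^{3,2}$ rather than $C^\infty$ regularity is both what the paper does and all that is needed for \eqref{eq:interpolDKT}--\eqref{eq:ic}. Your flag about the boundary data under this approximation is a fair concern, one which the paper handles by the same route as \cite{Ba11,BaBoNo17,RuSiSm22} rather than addressing head-on; it is not a defect of your proposal relative to the paper, but neither does your sketch resolve it.
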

\begin{proof}
\allowdisplaybreaks
The proof combines $\Gamma$-convergence arguments related to those used in \cite{Ba11} and the paradigm 
of the heterogeneous multiscale method as it is discussed in \cite{EMiZh05}.

From the boundedness of $Q^{2,\gamma}_h$ stated in \eqref{eq:alphaBetah}, 
the clamped boundary conditions, the Poincar{\'e} inequality, and the assumption 
on the macroscopic quadrature scheme, we deduce that there exist constants 
$C,\;c >0$ such that
$E_H^{h}[\psi_H^{h}] \geq c \norm{\nabla \theta_H[\psi_H^{h}]}_{L^2(\omega)}^2 - C\,.$ 

Since $\lznormO{\nabla \theta_H[\cdot]}$ is a norm on 
$\left\{ w_H \in {\bf{W}}_H  \vert  \ w_H(z)\!=\!0,\ \nabla w_H(z)\! =\! 0 \; \forall z \in \mathcal{N}_H \cap \Gamma_D \right\}\,, $  
for every $H,h>0$ there exists a minimizer $\psi_H^{h}$ for the discrete minimization problem and using \eqref{eq:ia} we have that 
$\lznormO{\nabla \theta_H[\psi_H^{h}]} + \lznormO{\nabla \psi_H^{h}}\leq C\,.$ 
Thus, there exists a subsequence (not relabeled), and $\psi \in W^{1,2}(\omega;\R^3)$, $z \in W^{1,2}(\omega; \R^{3\times2})$, such that $\psi_H^{h} \rightharpoonup \psi$ in $W^{1,2}(\omega;\R^3)$ and $\theta_H[\psi_H^{h}] \rightharpoonup z$ in $W^{1,2}(\omega; \R^{3\times2})$. Using the estimate \eqref{eq:ib} and summing over all $T\in\mathcal{T}_H$, we get $\lznormO{\nabla \psi_H^{h} - \theta_H[\psi_H^{h}]} \leq cH\lznormO{\nabla \theta_H[\psi_H^{h}]}$, hence $\nabla \psi_H^{h} \rightarrow \nabla \psi = z$ strongly in $L^2$. This yields $\psi \in W^{2,2}(\omega; \R^3)$. As in \cite{Ba11}, the attainment of the boundary conditions and the isometry constraint is straightforward. Hence, $\psi$ is admissible. 
Next, we investigate the convergence of the discrete force term.
\begin{align}\label{eq:quadratureErrorRHS}
\left\vert \sum_{\Hcell\in\Hmesh} |\Hcell|\sum_{i=1}^l\HQuadW_i f(\HQuadPi) \cdot \psi_H^h(\HQuadPi)-
\int_\omega f\cdot \psi_H^h\d x\right\vert \leq  C H \norm{f}_{W^{1,p}(\omega)} \norm{\psi_H^h}_{W^{1,2}(\omega)}
\end{align}
follows from the approximation properties of the quadrature rule, the regularity of $f$, and the estimate for the corresponding force term 
quadrature error in \cite[Theorem 4.1.5, Theorem 4.1.6 and the corresponding estimate in its proof]{Ci78}. 
By the boundedness of $\psi_H^{h}$ in $W^{1,2}(\omega;\R^3)$ the right hand side vanishes for $H\rightarrow 0$.
Furthermore, using the estimate for the quadratic form quadrature error in \cite[Theorem 4.1.4]{Ci78} we obtain
\begin{align}
\label{eq:quadratureError}
\left||\Hcell|\sum_{i=1}^l\HQuadW_i Q^{2,\gamma}(\HQuadPi, \nabla \theta_H[\psi_H^{h}](\HQuadPi)) - \int_\Hcell Q^{2,\gamma}(x, \nabla \theta_H[\psi_H^{h}] ) \d x\right| \leq C H \norm{\nabla \theta_H[\psi_H^{h}]}_{L^2(\Hcell)}^2\,,
\end{align}
since $Q^{2,\gamma}(\cdot, A)$ arising from the microscopic problem is Lipschitz continuous in $x$ and quadratic in $A\in\R^{2\times2}_\text{sym}$, and $\nabla\theta[\psi_H^{h}]$ is affine 
on every $\Hcell\in \Hmesh$. In particular, the constant on the right-hand side depends on the $W^{1,\infty}$-norm of the microscopic elasticity tensor $\mathcal{C}^{3}$. Next, we estimate
\begin{align}
&\sum_{\Hcell\in\Hmesh} |\Hcell|\sum_{i=1}^l\HQuadW_i Q^{2,\gamma}_{h}(\HQuadPi, \nabla \theta_H[\psi_H^{h}](\HQuadPi))
= \sum_{\Hcell\in\Hmesh} |\Hcell|\sum_{i=1}^l\HQuadW_i Q^{2,\gamma}(\HQuadPi, \nabla \theta_H[\psi_H^{h}](\HQuadPi))\nonumber\\
 &\quad \qquad\qquad+\sum_{\Hcell\in\Hmesh} |\Hcell|\sum_{i=1}^l\HQuadW_i \left(Q^{2,\gamma}_{h}(\HQuadPi, \nabla \theta_H[\psi_H^{h}](\HQuadPi)) 
- Q^{2,\gamma}(\HQuadPi, \nabla \theta_H[\psi_H^{h}](\HQuadPi))\right)\nonumber\\ 
&\geq \sum_{\Hcell\in\Hmesh} |\Hcell|\sum_{i=1}^l\HQuadW_i Q^{2,\gamma}(\HQuadPi, \nabla \theta_H[\psi_H^{h}](\HQuadPi)) - C h \sum_{\Hcell\in\Hmesh} |\Hcell|\sum_{i=1}^l\HQuadW_i  \vert \nabla \theta_H[\psi_H^{h}](\HQuadPi)\vert^2 \nonumber\\
&= \int_\omega Q^{2,\gamma}(x, \nabla \theta_H[\psi_H^{h}] ) \d x - C h  \norm{\nabla\theta_H[\psi_H^{h}]}_{L^2(\omega)}^2\nonumber\\
&\quad - \sum_{\Hcell\in\Hmesh}\left( |\Hcell|\sum_{i=1}^l\HQuadW_i Q^{2,\gamma}(\HQuadPi, \nabla \theta_H[\psi_H^{h}](\HQuadPi)) 
- \int_\Hcell Q^{2,\gamma}(x, \nabla \theta_H[\psi_H^{h}] ) \d x  \right) \nonumber\\
&\geq\int_\omega Q^{2,\gamma}(x, \nabla \theta_H[\psi_H^{h}] ) \d x -  C(h+H ) \norm{\nabla\theta_H[\psi_H^{h}]}_{L^2(\omega)}^2\,. \label{eq:TotalErr}
\end{align}

Here, in the first inequality, we used the error estimate \eqref{eq:ErrorHMM}. Furthermore, we used that $\vert\nabla\theta_H[\psi_H^{h}]\vert^2$ is a quadratic polynomial and thus can be integrated exactly by the macroscopic quadrature scheme. The second inequality is an application of \eqref{eq:quadratureError}. Since $\nabla \theta_H[\psi_H^{h}]$ converges weakly to $D^2\psi$ in $L^2(\omega,\R^{3\times 2\times 2})$ and $(x,A) \mapsto Q^{2,\gamma}(x,A)$ is 
Lipschitz continuous in $x$, and convex and quadratic in $A$, 
we deduce by weak lower semicontinuity that $E[\psi] \leq \liminf_{H,h \rightarrow 0} E^{h}_H[\psi_H^{h}]$.
\bigskip

Regarding a recovery sequence, let $\psi^\ast \in \mathcal{A}$ be a minimizer of $E[\cdot]$, in particular $\psi^\ast\in W^{2,2}(\omega;\R^3)$. To make use of the estimate \eqref{eq:ic}, $W^{3,2}$-regularity is required. Hence, a density result together with a diagonal sequence argument would suffice to get convergence. Standard mollification of $\psi^\ast$ is not suitable, since it violates the isometry constraint. However, Hornung showed in \cite[Theorem 1]{Ho11} that for $\rho > 0$ there exists $\psi^\rho \in W^{3,2}(\omega; \R^3) \cap \mathcal{A}$ such that $\norm{\psi^\ast - \psi^\rho}_{W^{2,2}} < \rho$. A version of this density result was first proved by Pakzad for isometric immersions $\psi\colon\omega\rightarrow\R^3$ of convex sets $\omega\subset\R^2$ in \cite[Theorem I]{Pa04}.  Let $\psi_H^\rho \in {\bf{W}}_H^3$ be the DKT-interpolant of $\psi^\rho$. 
The same construction of a recovery sequence for DKT was already used in \cite{Ba11, BaBoNo17, RuSiSm22}.
Then we can estimate 

\begin{align*}
&\sum_{\Hcell\in\Hmesh} |\Hcell|\sum_{i=1}^l\HQuadW_i Q^{2,\gamma}_{h}(\HQuadPi, \nabla \theta_H[\psi_H^\rho](\HQuadPi))
= \sum_{\Hcell\in\Hmesh} |\Hcell|\sum_{i=1}^l\HQuadW_i Q^{2,\gamma}(\HQuadPi, \nabla \theta_H[\psi_H^\rho](\HQuadPi))\\
&\qquad \qquad \qquad \qquad +\sum_{\Hcell\in\Hmesh} |\Hcell|\sum_{i=1}^l\HQuadW_i \left(Q^{2,\gamma}_{h}(\HQuadPi, \nabla \theta_H[\psi_H^\rho](\HQuadPi)) - Q^{2,\gamma}(\HQuadPi, \nabla \theta_H[\psi_H^\rho](\HQuadPi))\right)\\
&\leq\sum_{\Hcell\in\Hmesh} |\Hcell|\sum_{i=1}^l\HQuadW_i Q^{2,\gamma}(\HQuadPi, \nabla \theta_H[\psi_H^\rho](\HQuadPi)) 
+ Ch \sum_{\Hcell\in\Hmesh} |\Hcell|\sum_{i=1}^l\HQuadW_i  \vert \nabla \theta_H[\psi_H^{h}](\HQuadPi)\vert^2 \\
&=\int_\omega Q^{2,\gamma}(x, \nabla \theta_H[\psi_H^\rho] ) \d x+ C h \norm{\nabla \theta_H[\psi_H^\rho]}_{L^2(\omega)}^2\\
&\quad + \sum_{\Hcell\in\Hmesh}\left( |\Hcell|\sum_{i=1}^l\HQuadW_i Q^{2,\gamma}(\HQuadPi, \nabla \theta_H[\psi_H^\rho](\HQuadPi)) - \int_\Hcell Q^{2,\gamma}(x, \nabla \theta_H[\psi_H^\rho] ) \d x \right)\\
&\leq\int_\omega Q^{2,\gamma}(x, \nabla \theta_H[\psi_H^\rho] ) \d x + C(H+h) \norm{\nabla \theta_H[\psi_H^\rho]}_{L^2(\omega)}^2   \\
&\leq \int_\omega Q^{2,\gamma}(x, D^2\psi^\rho) + \mathcal{C}^{2,\gamma}  
\left(D^2\psi^\rho + \nabla \theta_H[\psi_H^\rho] \right) : 
\left(D^2\psi^\rho - \nabla \theta_H[\psi_H^\rho]\right) \d x  \\
&\quad  + C(H+h) \norm{\nabla \theta_H[\psi_H^\rho]}_{L^2(\omega)}^2\\
&\leq\int_\omega Q^{2,\gamma}(x, D^2\psi^\rho) \d x+ C \norm{D^2\psi^\rho + \nabla \theta_H[\psi_H^\rho]}_{L^2(\omega)}\norm{D^2\psi^\rho - \nabla \theta_H[\psi_H^\rho]}_{L^2(\omega)}\\
&\quad + C(H+h) \left(\norm{\psi^\rho}_{W^{2,2}(\omega)}^2+ H^2\norm{\psi^\rho}_{W^{3,2}(\omega)}^2\right)\\
&\leq \int_\omega Q^{2,\gamma}(x, D^2\psi^\ast) \d x + C \norm{D^2 \psi^\ast + D^2\psi^\rho }_{L^2(\omega)}\norm{D^2\psi^\ast - D^2\psi^\rho }_{L^2(\omega)}\\
&\quad  + C H \norm{\psi^\rho}_{W^{3,2}(\omega)} \left(  \norm{\psi^\rho}_{W^{2,2}(\omega)}+ H\norm{\psi^\rho}_{W^{3,2}(\omega)}\right)\\
&\quad + C(H + h)\left(  \norm{\psi^\rho}_{W^{2,2}(\omega)}^2+ H^2\norm{\psi^\rho}_{W^{3,2}(\omega)}^2\right)\\
&\leq \int_\omega Q^{2,\gamma}(x, D^2\psi^\ast) \d x 
+ C (1+\rho) \rho  
+ C H \norm{\psi^\rho}_{W^{3,2}(\omega)} \left(1+ \rho +  H \norm{\psi^\rho}_{W^{3,2}(\omega)}\right) \\
&\quad + C (H+h)\left( (1+\rho)^2 + H^2 \norm{\psi^\rho}_{W^{3,2}(\omega)}^2\right)\,.
\end{align*}
Here, we again used \eqref{eq:ErrorHMM},  \eqref{eq:quadratureError}, 
the estimate 
$$
\norm{\nabla \theta_H[\psi_H^\rho]}_{L^2(\omega)} \!\leq \!\norm{D^2\psi^\rho}_{L^2(\omega)}
+\norm{\nabla \theta_H[\psi_H^\rho]-D^2\psi^\rho}_{L^2(\omega)}
\!\leq\! \norm{\psi^\rho}_{W^{2,2}(\omega)}
+ H \norm{\psi^\rho}_{W^{3,2}(\omega)}
$$
as a consequence of \eqref{eq:ic}, and finally the estimate $\norm{\psi^\ast - \psi^\rho}_{W^{2,2}} < \rho$.
For the force term, we get using the estimate for the linear form quadrature error in \cite[Theorem 4.1.5]{Ci78}
\begin{align*}
&\sum_{\Hcell\in\Hmesh} |\Hcell|\sum_{i=1}^l\HQuadW_i f(\HQuadPi) \cdot \psi_H^\rho(\HQuadPi)
\leq \int_\omega f\cdot \psi^\ast \d x + \int_\omega f\cdot ( \psi_H^\rho - \psi^\ast) \d x \\ 
& \qquad \qquad \qquad \qquad 
+ \left(\sum_{\Hcell\in\Hmesh} |\Hcell|\sum_{i=1}^l\HQuadW_i f(\HQuadPi) \cdot \psi_H^\rho(\HQuadPi) - \int_\omega f\cdot \psi_H^\rho \d x \right)\\
&\leq \int_\omega f\cdot \psi^\ast \d x + \norm{f}_{L^2(\omega)} 
\left( \norm{\psi_H^\rho- \psi^\rho}_{L^2(\omega)} +  \norm{\psi^\rho- \psi^\ast}_{L^2(\omega)} \right) \\
&\quad + C H \norm{f}_{W^{1,p}(\omega)}  \norm{\psi_H^\rho}_{W^{1,2}(\omega)} \\
&\leq \int_\omega f\cdot \psi^\ast \d x + C \left(H^3 \norm{\psi^\rho}_{W^{3,2}(\omega)} + \rho\right)
+ C H (H^2 +1) \norm{\psi^\rho}_{W^{3,2}(\omega)}
\end{align*}
Finally, we choose $h=h(\rho)$ and $H = H(\rho)$ such that 
\begin{align}
\label{eq:coupling}
h(\rho) + H(\rho) + \left(h(\rho)+1\right)\left(H(\rho) 
+H(\rho)^3\right)\left(\norm{\psi^\rho}_{W^{3,2}(\omega)} 
+ \norm{\psi^\rho}^2_{W^{3,2}(\omega)}\right) < \rho
\end{align}
and obtain 
\begin{align*}
E[\psi^\ast] \geq \limsup_{\rho\rightarrow 0 } 
E^{h(\rho)}_{H(\rho)}[\psi_{H(\rho)}^\rho] 
\geq \liminf_{\rho\rightarrow 0} E^{h(\rho)}_{H(\rho)}[\psi_{H(\rho)}^{h(\rho)}] \geq E[\psi]\,.
\end{align*}

Hence, $\psi$ is a minimizer of $E[\cdot]$.
\end{proof}

\begin{remark}
		The convergence of the recovery sequence is crucially controlled by the approximation parameter $\rho$ via the appearance of the 
		norm $\norm{\psi^\rho}_{W^{3,2}(\omega)}$. 
		The choice of $h=h(\rho)$ and $H = H(\rho)$ in \eqref{eq:coupling} is implicit and thus does not provide a rate of convergence. 
		This disadvantage is shared with similar results in \cite{Ba11} and \cite{BaBoNo17} 
		and is caused by the nonlinearity of the problem, more precisely the nonlinear isometry constraint.	
\end{remark}

\begin{remark}
	Note that the regularity assumption on the force $f\in W^{1,p}(\omega;\R^3)$ with $p>2$ is necessary to control the force term integration error. To this end, as outlined in the proof, one uses \cite[Theorem 4.1.5 and Theorem 4.1.6]{Ci78}. 
		This assumption is not a peculiarity of the homogenized plate model. In fact, it would also be necessary for the numerical analysis of a 
		finite element approximation of a homogeneous plate model as in \cite{Ba11} with general right hand side.
\end{remark}

%%%%%%%%%%%%%%%%%%%%%%%%%%%%%%%%%%%%%%%%%%%%%%%%%%%%%%%%%%%%%%
%%%%%%%%%%%%%%%%%%%%%%%%%%%%%%%%%%%%%%%%%%%%%%%%%%%%%%%%%%%%%%
%%%%%%%%%%%%%%%%%%%%%%%%%%%%%%%%%%%%%%%%%%%%%%%%%%%%%%%%%%%%%%
\section{Implementational aspects}\label{sec:Implementation}
Following the paradigm of the heterogeneous multiscale method (HMM) the actual macroscopic plate deformation is computed using the 
DKT discretization described in ~\cref{sec:DKTapprox} and minimizing the discrete energy 
$E^h_H$ defined in \eqref{eq:discrE}. To this end for each macroscopic quadrature 
the function $Q^{2,\gamma}_h(A)$ has to be computed for a basis of the space of symmetric 
$2\times 2$ matrices $A$, which requires the solution of 
the micrscopic optimization problem \eqref{eq:Q_hinf}.  

In explicit, the microscopic problem results in solving the linear system 
\eqref{eq:Qgamma_hvartheta_h} for $x$ in the set of all macroscopic quadrature points $\HQuadPi$  and for
\begin{align}
\label{eq:BasisSymMatr}
A \in 
\left\{  \begin{pmatrix}
1 & 0 \\ 0 & 0
\end{pmatrix},\begin{pmatrix}
0 & 0 \\ 0 & 1
\end{pmatrix} ,\begin{pmatrix}
0 & 1 \\ 1 & 0
\end{pmatrix}  \right\}\,,
\end{align}
which represents a three-dimensional, linearized elasticity  corrector problem 
rescaled to the unit cube $\Yspace = [0,1]^2\times [-\frac12,\frac12]\subset \R^3$. 
As described in ~\cref{sec:ApproxHomTensor}, continuous, piecewise $3$-affine finite elements are considered on 
the unit cube discretized by a uniform hexahedral mesh $\hmesh$ with $h$ denoting the maximal edge length. For the assembly of the stiffness matrix and the right-hand side we apply a tensor product Gauss quadrature rule, with 27 quadrature points per cell. 
Hence, the quadrature is exact on tensor products of quintic functions, and thus 
fulfills the consistency condition to be exact on and unisolvent with respect to tri-affine functions. This is sufficient to 
ensure a first-order convergence under the assumption that the solution of the linearized elasticity problem is in $W^{2,2}$ on $\Yspace$ 
\cite[Chapter 4.1.]{Ci78} . 
In fact, the strong consistency condition \eqref{eq:ConsistencyCondition} holds if the entries of the elasticity tensor $\mathcal{C}^3_{jikl}$  
are assumed to be tensor products of cubic functions.

On the macroscale, we consider a uniform triangulation of $\omega$ with grid size $H$ and ask for a 
minimizing deformation $\psi^H$ of $E_H^h[\cdot]$ under the nonlinear, discrete isometry constraint 
$\nabla \psi_H(z)^\top \nabla \psi_H(z) = I_2$ for all $z\in \mathcal{N}_H$ (cf.~\eqref{eq:ABCH}).
To compute the discrete energy $E_H^h[\psi_H]$ for $\psi_H \in {\bf{W}}_H^3$ a 
simplicial Gauss quadrature with 12 quadrature points on each triangle is used, which is exact on polynomials of order $6$.
Hence, if the components of the tensor representing the quadratic form $ Q^{2,\gamma}(x,\cdot)$ as a function of $x$ are 
quartic polynomials the integration of the  energy $\mathcal{W}^\gamma$ defined in \eqref{eq:homBendEnergy} is exact.

The macroscopic problem consists of minimizing the energy  $E_H^h[\cdot]$ over all discrete isometries $\phi_H\in\mathcal{A}^\text{BC}_H$.
To deal with the isometry constraint we take into account the Lagrangian 
\begin{align}
L_H^h [\psi_H,p_H] \coloneqq E_H^h[\psi_H] - \int_\omega \mathcal{I}_H\left[ \left(\nabla \psi_H^\top\nabla\psi_H - I_2\right):p_H\right] \d x\,.
\end{align}
with a Lagrangian multiplier $p_H \in \mathcal{S}_H^{2\times2}$, where $ \mathcal{S}_H^{2\times2}$ is the space of continuous piecewise affine, symmetric $2\times2$-matrices. 
Let us remark, that the quadrature scheme applied to the stored elastic energy is exact on the second term of the Lagrangian multiplying the constraint and the multiplier. 
As in \cite{RuSiSm22} we use the IPOPT software library presented in \cite{WaBi06} to compute a saddle point of this Lagrangian
using a Newton scheme with backtracking. This requires evaluating first and second variations of the discrete energy and the constraint. In IPOPT, we used the default backtracking strategy by setting ``filter'' for ``line\_search\_method''. The (relative) stopping tolerance ``tol" was set to $10^{-12}$.
We always used a small perturbation of the identity with $L^\infty$ norm $0.001$ as the initial deformation in the Newton scheme.
Note that an application of Newton's scheme requires the invertibility of the Hessian of the Lagrangian. Although, there is no theoretical guarantee, in our experiments solvability of the associated linear system was always observed.
%%%%%%%%%%%%%%%%%%%%%%%%%%%%%%%%%%%%%%%%%%%%%%%%%%%%%%%%%%%%%%
%%%%%%%%%%%%%%%%%%%%%%%%%%%%%%%%%%%%%%%%%%%%%%%%%%%%%%%%%%%%%%
%%%%%%%%%%%%%%%%%%%%%%%%%%%%%%%%%%%%%%%%%%%%%%%%%%%%%%%%%%%%%%
\section{Numerical experiments}\label{sec:NumEx}
In this section, we numerically verify the established convergence results, 
discuss the qualitative properties of the homogenized plate model,
and compare it with bending experiments for paper with different fine-scale structures.
We always consider a square-shaped plate, described by $\omega = (0,1)^2$. 

We take into account a material distribution function $v$ on a rescaled microscopic cell $Y \times I$  
with Lam{\'e} constants $\lambda(y)$ and $\mu(y)$ given by
\begin{align*}
\lambda(y) = (r + (1 - r)v(y)) \lambda\,, \quad \mu(y) = (r + (1 - r)v(y)) \mu\,.
\end{align*}
where $\lambda = \frac53, \mu = \frac52$, and $r=\frac{1}{50}$ is the ratio between soft and hard material.
This corresponds to Young's modulus $E=6$ and Poisson ratio $\nu = 0.2$ on the hard phase.

The triangulations on $\omega$ are generated by uniform, regular (so-called red) refinement, starting 
from a single, coarse rectangular mesh subdivided into two triangles.
The plate deformations are caused by a constant force $f$ and/or prescribed boundary conditions.
\medskip

\noindent \textit{Experimentally observed convergence behaviour.} 
To examine the convergence rates we consider a homogenized bending energy with a constant microstructure 
given by the continuous, piecewise affine material distribution function 
\begin{align}
\label{eq:pwaffinematerial}
v(y) = \begin{cases}
2 y_1 \,,&\text{ if } y_1 \leq \frac12 \\
-2 y_1 + 2 \,,&\text{ if } y_1 > \frac12\,
\end{cases}
\end{align}
on the unit cell
and a size-of-microstructure-to-thickness ratio of $\gamma = 1$. 
We take into account two different load scenarios:
\medskip
(a) clamped boundary conditions 
\begin{align}
\psi(x) = ( x_1  \pm \tfrac{3}{16}, x_2 , 0)^\top, \quad 
\nabla \psi(x) = \begin{pmatrix} I_2 \\ 0 \ 0 \end{pmatrix}
\label{eq:bdexpA}
\end{align}
on $ (0,1) \times  \{\tfrac12 \mp \tfrac12\}$ and zero load,
(b) a uniform vertical load $f= (0,0,5)^\top$, and clamped boundary conditions 
\begin{align*}
\psi(x) = \begin{pmatrix} x \\ \pm 0 \end{pmatrix}, \quad 
\nabla \psi(x) = \begin{pmatrix} I_2 \\ 0 \ 0 \end{pmatrix}
\end{align*}
on $ (0,1) \times  \{0\}$.
%To avoid a saddle point as starting configuration in (a) we use a small random perturbation with $L^\infty$ norm $0.001$ as initialization for the descent algorithm.
For experiments (a) and (b) the resulting deformations are shown in ~\cref{tab:Convergence_h} on the right. Note that in experiment (a), for different random initial data, either a buckled up, or a buckled down deformation can be observed, as displayed.
In the same table, we evaluate the impact of the microscopic grid size $h$ on the approximation of the effective tensor $\mathcal{C}^{2,\gamma}$.
Since the minimizer of the continuous problem is unknown, we compare it with the discrete solution on the finest mesh ($h^\ast = \frac{1}{128}$)
and list $|\mathcal{C}^{2,\gamma}_h - \mathcal{C}^{2,\gamma}_{h^\ast}|_\infty$ for decreasing $h$. In addition, the $L^2$-norm of the difference of the discrete Hessians $\norm{ \nabla \theta_{H^\ast} [\psi_{H^\ast}^{h}] - \nabla \theta_{H^\ast} [\psi_{H^\ast}^{h^\ast}]}_{L^2}$ is given for a macroscopic grid size $H^\ast = \frac{\sqrt{2}}{256}$. In both cases, we experimentally observe a quadratic convergence rate.
\medskip
%ttttttttttttttt
\begin{table}[h]
\centering
\begin{tabularx}{0.99\textwidth}{c c c c c c}
\toprule
$h$
& $|\mathcal{C}^{2,\gamma}_h - \mathcal{C}^{2,\gamma}_{h^\ast}|_\infty$
&  \multicolumn{2}{c}{$\norm{ \nabla \theta_{H^\ast} [\psi_{H^\ast}^h] - \nabla \theta_{H^\ast} [\psi_{H^\ast}^{h^\ast}]}_{L^2}$}
& \multicolumn{2}{c}{Deformations}\\
\cmidrule(lr){3-4} \cmidrule(lr){5-6} 
&&(a)&(b)& (a) & (b) \\
\midrule
$2^{-3}$&0.004265& 0.005098& 0.0947412& \multirow{5}{*}{\begin{minipage}{0.15\linewidth}\centering
\includegraphics[width=0.9\linewidth]{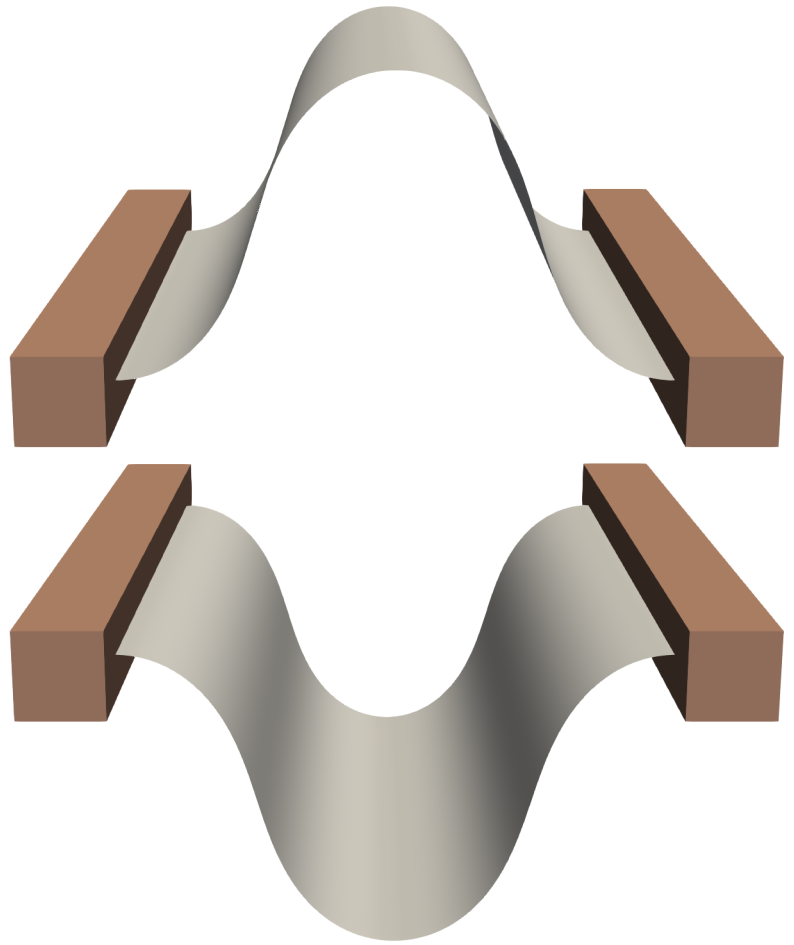}
\end{minipage}}& \multirow{5}{*}{\begin{minipage}{0.085\linewidth}
\includegraphics[width=\linewidth]{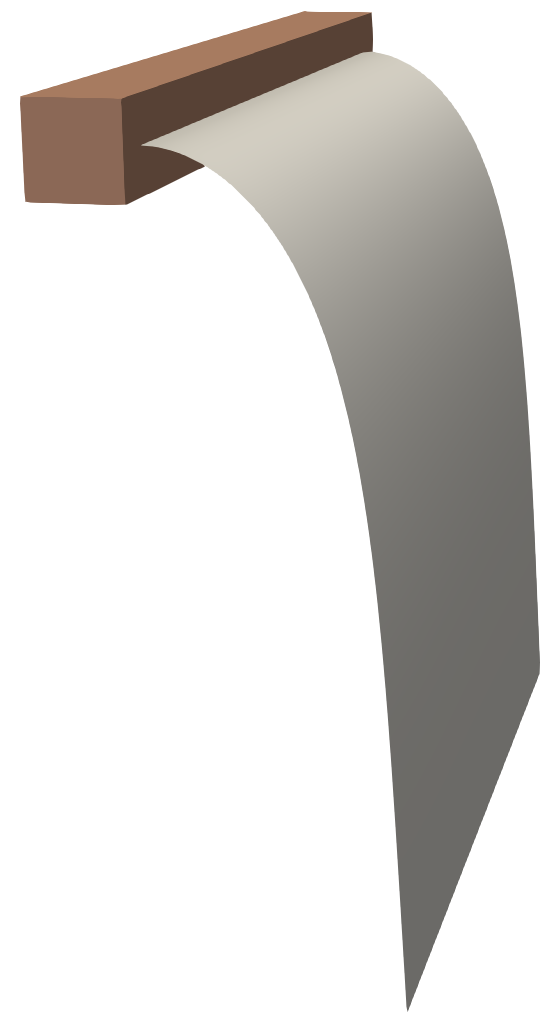}
\end{minipage}} \\
$2^{-4}$&0.001021& 0.001259 & 0.0235653& &\\
$2^{-5}$&0.000243& 0.000311 & 0.00566203& &\\
$2^{-6}$&4.8e-05& 6.4e-05& 0.00112114& &  \\
$2^{-7}$& - & - & - & & \\
\bottomrule
\end{tabularx}
\caption{Evaluation of the experimental convergence of the homogenized tensor $\mathcal{C}^{2,\gamma}_h$
and the discrete Hessian for fixed macroscopic grid size $H^\ast =  \frac{\sqrt{2}}{256}$
and decreasing microscopic grid size $h$.}
\label{tab:Convergence_h}
\end{table}
%ttttttttttttttt

Next, in ~\cref{tab:Convergence_hH} we investigate the convergence behavior for the load configurations (a) and (b) 
in the case of a simultaneous decrease of the microscopic and macroscopic grid size $h$ and $H$, respectively.
The minimal discrete energy $E_H^{h}[\psi_H^{h}(H)]$ and the $L^2$-norm of the difference of the discrete Hessians 
$\norm{ \nabla \theta_H [\psi_H^{h}] - \nabla \theta_{H^\ast} [\psi_{H^\ast}^{h^\ast}]}_{L^2}$ are shown for  
$H^\ast = \frac{ \sqrt{2}}{256}$ and $h^\ast =2H^\ast$. Here, one observes a linear convergence rate.

%ttttttttttttttt
\begin{table}[H]
\centering
\begin{tabularx}{0.9\textwidth}{c c c c c c}
\toprule
$\frac{1}{\sqrt{2}}H$
&
$h$
&  \multicolumn{2}{c}{$E_H^{h}[\psi_H^{h}]$}
&  \multicolumn{2}{c}{$\norm{ \nabla \theta_H [\psi_H^{h}] - \nabla \theta_{H^\ast} [\psi_{H^\ast}^{h^\ast}]}_{L^2}$}\\
\cmidrule(lr){3-4}\cmidrule(l){5-6}
&&(a)&(b)&(a)&(b) \\
\midrule
$2^{-4}$&$2^{-3}$&1.7228944&-1.6769788 &1.17653&0.428218 \\
$2^{-5}$&$2^{-4}$&1.676654&-1.6963124 &0.586753&0.208557  \\
$2^{-6}$&$2^{-5}$&1.6688657&-1.7005727 &0.284853&0.10684 \\
$2^{-7}$&$2^{-6}$&1.6666743&-1.7015843 &0.138362&0.0528297  \\
$2^{-8}$&$2^{-7}$& 1.6664753 & -1.7017899 &-& -\\
\bottomrule
\end{tabularx}
\caption{Evaluation of the experimental convergence of the energy and the Hessian of the macroscopic deformation for simultaneous decreasing macroscopic and microscopic grid size $H$ 
and $h$, respectively.}
\label{tab:Convergence_hH}
\end{table}
%ttttttttttttttt

 Furthermore, we investigate the dependence of $Q^{2,\gamma}$ on the ratio $\gamma=\tfrac{\delta}{\varepsilon}$ 
of the thickness $\delta$ and the size of the microstructure $\varepsilon$ for the same microscopic material distribution \eqref{eq:pwaffinematerial} as before. Using the known symmetries of $\mathcal{C}^{2,\gamma}$ one can consider with a slight misuse of notation
$\mathcal{C}^{2,\gamma,v}\in \R^{3\times3}$ following the Voigt notation for elasticity tensors 
with  
$$Q^{2,\gamma}(A) =  \left(\mathcal{C}^{2,\gamma,v}\begin{pmatrix}
A_{11} \\ A_{22} \\A_{12} + A_{21} 
\end{pmatrix}\right) \cdot \begin{pmatrix}
A_{11} \\ A_{22} \\A_{12} + A_{21} 
\end{pmatrix}\,.$$
Due to the symmetry of the material distribution, we get that $\mathcal{C}^{2,\gamma,v}_{13} = \mathcal{C}^{2,\gamma,v}_{23}= 0 $. 
For $\gamma_i = 10^{(-1 + \frac{2i}{31})}$, $i = 0,\dots,31$, we computed $\mathcal{C}^{2,\gamma_i,v}$ on the unit cube with gridsize $h^\ast = \frac{1}{128}$. The values of the different components of $\mathcal{C}^{2,\gamma,v}$ are plotted in ~\cref{fig:plotGammaTensor} with $\gamma$ on the x-axis plotted with logarithmic scaling.
%ffffffffffffffffff
\begin{figure}[H]
\begin{tikzpicture}
\node (0,0) {\includegraphics[width=0.98\linewidth]{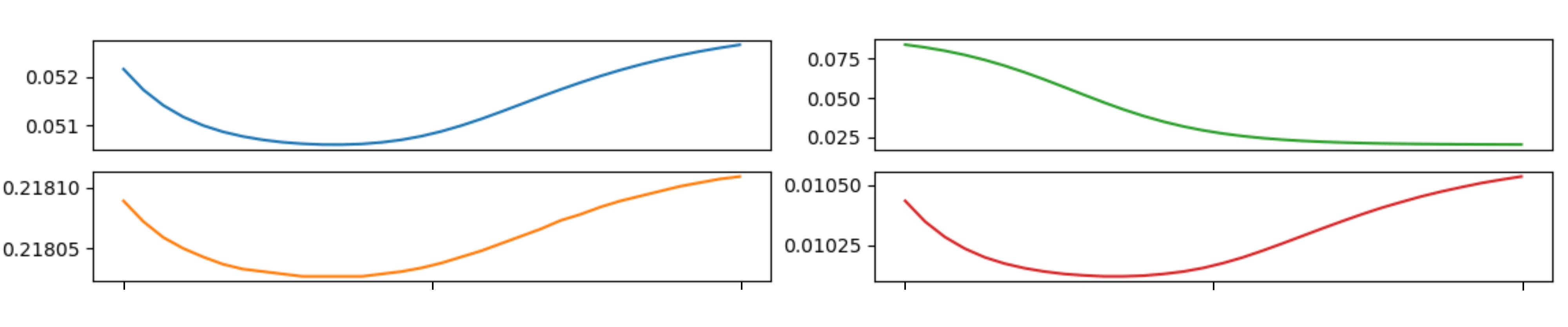}};
\node at (-0.5,0.5) {$\mathcal{C}^{2,\gamma,v}_{11}$};
\node at (-0.5,-0.6) {$\mathcal{C}^{2,\gamma,v}_{22}$};
\node at (5.84,0.5) {$\mathcal{C}^{2,\gamma,v}_{33}$};
\node at (5.84,-0.6) {$\mathcal{C}^{2,\gamma,v}_{12}$};

\node at (-2.85,-1.5) {$\gamma$};
\node at (-0.35,-1.15) {\tiny{1.e+1}};
\node at (-2.85,-1.15) {\tiny{1.e+0}};
\node at (-5.35,-1.15) {\tiny{1.e-1}};

\begin{scope}[xshift=6.35cm]
\node at (-2.85,-1.5) {$\gamma$};
\node at (-0.35,-1.15) {\tiny{1.e+1}};
\node at (-2.85,-1.15) {\tiny{1.e+0}};
\node at (-5.35,-1.15) {\tiny{1.e-1}};
\end{scope}
;\end{tikzpicture}
\caption{Plots of the different components of the homogenized tensor $\mathcal{C}^{2,\gamma}$ for varying $\gamma$.}
\label{fig:plotGammaTensor}
\end{figure}
%ffffffffffffffffff

From now on, we consider microstructures that only consist of a hard and a soft phase with piecewise constant $v:Y\rightarrow [0,1] \to \{0,1\}$. 
Such microstructures do not fulfill the assumptions of our convergence theory. Still, they are closer to real applications and we indeed compare our numerical findings with physical experiments for the bending of fine-scale structured paper. Paper and paperboard are known to deform only isometrically. Note that paper behaves, in opposite to the model we are simulating here, in a non-isotropic way, depending on the direction of manufacturing. In \cite{YoNa07}, in-plane orthotropic constants for paper and paperboard were experimentally derived. 
Nevertheless, for simplicity, we assume in our experiments an isotropic model 
with Lam{\'e} constants $\mu = \frac53$, $\lambda=\frac52$ inspired by the constants derived in \cite{YoNa07}. 
Note that the theory provided in this paper is not limited to isotropic material laws, since \cref{prop:EHMM} is formulated for a more general elastic tensor $\mathcal{C}^3$.

As proof of concept, we carried out experiments with glued layers of paper. 
In explicit, a thick paperboard ($300 g/m^2$) with 
cut out fine-scale structures was glued on a sheet of paper ($120g/m^2$). 
The cutting of the $300 g/m^2$ layer of thick paper was performed with a Cricut\copyright Maker. Regions with glued paper layers are considered the hard phase, 
whereas the complement is considered the soft phase.
Given the fact, that the thickness of the paper layers is much smaller than the scale on which the pattern is approximately periodic 
a model approximation with a homogeneous thick thin plate with vertically constant hard and soft material properties appears to be feasible.

\noindent \textit{Calibration of the layered, glued paper model.} Given the thickness of two paper types one is at first led to a material property ratio of $r = \frac{120}{420} \approx 0.286$. 
But, due to the layer of glue in between the glued paper layers of the hard phase, one observes a proportionally much higher stiffness as examined in ~\cref{fig:GlueComparison}.
To compensate for this we quantitatively compared the bending in the experiment (b) of a single $120g/m^2$ sheet and the homogeneous compound of glued  $300 g/m^2$ and $120g/m^2$ paper, which gave rise to the effective material ratio of $r = \frac{1}{50}$.
We used this ratio in the following experiments.
%ffffffffffffff
\begin{figure}[htbp]
\centering
\includegraphics[width= \linewidth]{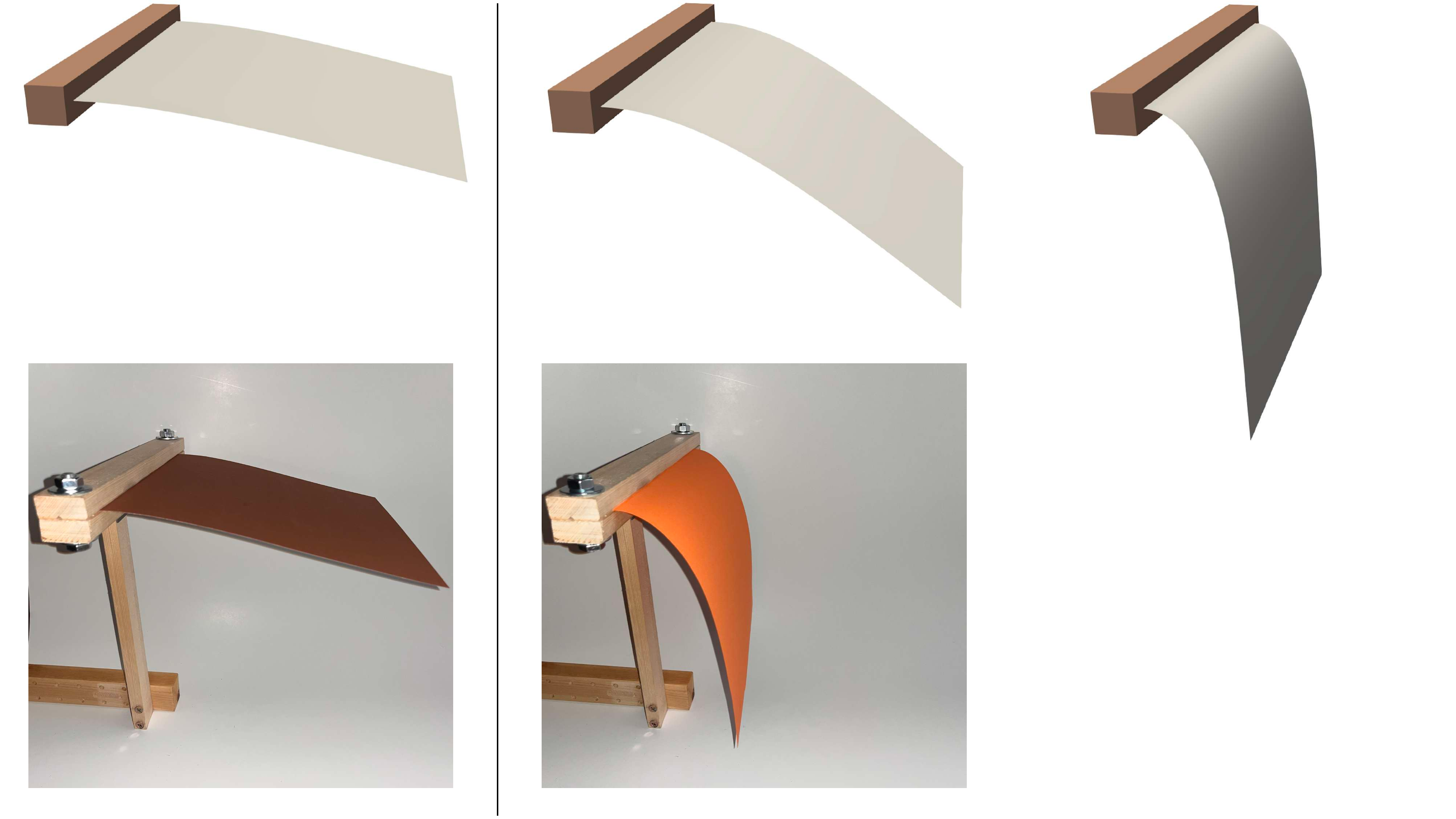}
\caption{Top row: numerically computed deformed configurations of a plate, clamped on the left side, under a uniform vertical load, with homogeneous material, with Lam{\'e} constants scaled with $1.0$ (left), $0.286$ (middle) and $0.02$ (right). Bottom row: photos of physical deformations of paper. Left: thick paper ($300g/m^2$) glued on thinner paper ($ 120g/m^2$). Middle: only thin paper ($120 g/m^2$).}
\label{fig:GlueComparison}
\end{figure}
%ffffffffffffff
\medskip

\noindent \emph{Microstructure with axes aligned stripes.} We consider a material distribution with $\gamma =0.1$ that corresponds to a periodic pattern of 
stripes of alternating hard and soft material aligned with the coordinate directions. On the rescaled microscopic cell $Y \times I$  this 
corresponds to hard material in the volumes $[\frac14,\frac34]\times [0,1]\times [-\frac12,\frac12]$, and soft material elsewhere.
We also take into account the same configuration rotated by $90$ degrees in the $Y$ plane. 
The constant homogenized tensors $\mathcal{C}^{2,\gamma}$ were computed for a uniformly hexagonal mesh on the unit cube with 
grid size $h=\tfrac1{128}$. The resulting tensors $\mathcal{C}^{2,\gamma}_{\vert\vert}$ and $\mathcal{C}^{2,\gamma}_{=}$, 
respectively, written as matrices in Voigt type notation, corresponding to the basis \eqref{eq:BasisSymMatr} are
\begin{align*}
\mathcal{C}^{2,\gamma,v}_{\vert\vert} = \begin{pmatrix}
0.04265 & 0.00853 & 0 \\
0.00853 & 0.5149 & 0 \\
0 & 0 & 0.05099
\end{pmatrix}\,, \quad  \mathcal{C}^{2,\gamma,v}_{=} = \begin{pmatrix}
0.5149 & 0.00853 & 0 \\
0.00853 & 0.04265 & 0 \\
0 & 0 & 0.05099
\end{pmatrix}\,.
\end{align*}
The plate is clamped on $[0,\frac15]\times [\frac25,\frac35]$ and a uniform force $f = (0,0,-5)^\top$ 
is taken into account. In resulting deformations are shown in ~\cref{fig:Ex123}. 
One observes that the microscopic pattern 
leads to very stiff behavior of the plate in the direction of the stripes and a very flexible behavior in the perpendicular direction.
We compare it with a corresponding paper experiment.
To this end, a sheet of paper  (green, $120g/m^2$) of size $28cm\times28cm$ is used with stripes of thick paper (red, $300 g/m^2$) of width $4mm$ where glued on top with a stripe distance of $4mm$. The resulting layered paper is clamped between two pieces of wood (cf. ~\cref{fig:Ex123}). 
\medskip

\noindent \emph{Microstructure reflecting radially arranged stripes.}  
Under the force and boundary conditions of the above experiment, we next consider a 
microstructure that mimics radial arranged rays of hard material centered at the midpoint 
$(0,0.5)$ of the left-hand side of the plate. The width of the stripes is proportional to the distance from the center.

First, we consider a stripe pattern similar to the one considered above with hard material on 
$(0,1) \times (\tfrac{1}{3},\tfrac{2}{3}) \times(-\tfrac{1}{2}, \tfrac{1}{2})$ and soft material elsewhere, where the ratio of the hard material $\tfrac13$ equals the ratio of the angle of a single hard material stripe and the angle 
of a single soft material stripe in the radially arranged strip pattern in the corresponding experiment. For a point $x \in \omega$ with 
$x-(0,0.5) = (r \cos \alpha, r \sin \alpha)$ for $\alpha \in (0,\pi)$ and $r\in \R_+$, we consider $\gamma = ar+b$, for $a,b\in \R$, such that $1 = \frac15a +b$ and $\frac{1}{10}= \frac{\sqrt{5}}{2}a+b$. This models the widening of the strips while maintaining the thickness of the plate.
Let us denote the resulting homogenized tensor by $\widehat{\mathcal{C}}^{2,\gamma}$ .
%and choose \MR{check this!} $\gamma=1$.
%\MR{rewrite description!} Hence, we considered a distribution of homogenized tensors as follows: 
%For $\alpha = \pi \frac{i}{31}$, and $\gamma = ar+b$, with $r = \frac{\sqrt{5}}{2}\frac{i}{31},i\in \{0,\dots,31\}$, 
%such that $1 = \frac15a +b$ and $\frac{1}{10}= \frac{\sqrt{5}}{2}a+b$, and a point $x \in [0,1]^2$ with 
%$|x - (0,\frac12)^\top| = r$, and $ x - (0,\frac12)^\top = r(\cos(\alpha - \frac\pi2),\sin(\alpha - \frac\pi2))^\top$.
%%%%%%%%%%

Then, applying the coordinate transformation formula  \cite[Section 3.4]{AlGePa19}
the resulting homogenized tensor $\mathcal{C}^{2,\gamma}(x)$ reflecting 
the radially arranged stripe pattern at $x \in \omega$ 
%with $x-(0,0.5) = (r \cos \alpha, r \sin \alpha)$ for $\alpha \in (0,\pi)$ and $r\in \R_+$
 is given by 
\begin{align*}
\mathcal{C}^{2,\gamma}(x) A : A \coloneqq&
\widehat{\mathcal{C}}^{2,\gamma} A_\alpha: A_\alpha \textit{ with }
 A_\alpha \coloneqq \begin{pmatrix}
 \cos\alpha & \sin\alpha \\
 -\sin\alpha & \cos\alpha
 \end{pmatrix}
 A
 \begin{pmatrix}
 \cos\alpha & -\sin\alpha \\
 \sin\alpha & \cos\alpha
 \end{pmatrix}
\end{align*}
for all $A\in \R^{2\times2}_\text{sym}$.

%The values of the homogenized quadratic form for different $x\in [0,1]^2$ were obtained by linear interpolation. 
%The same boundary conditions and force as in the previous experiments were applied. 
In ~\cref{fig:Ex123} in the top right, the resulting deformation is shown. 
Below,  on a $28cm\times28cm$ sheet of paper (purple, $120g/m^2$), 
we take into account 16  glued on stripes of thicker paper (yellow, $300 g/m^2$), radially symmetric, broadening with the distance from the  
clamped center part of the paper. 
The opening angle of the hard material (yellow) is $3.75^\circ$ and the opening angle of the soft material (purple)
is $7.5^\circ$ with the width of the hard material stripes ranges from $1mm$ to $1.2cm$.

%ffffffffffffff
\begin{figure}[htbp]
\centering
\includegraphics[width=\linewidth]{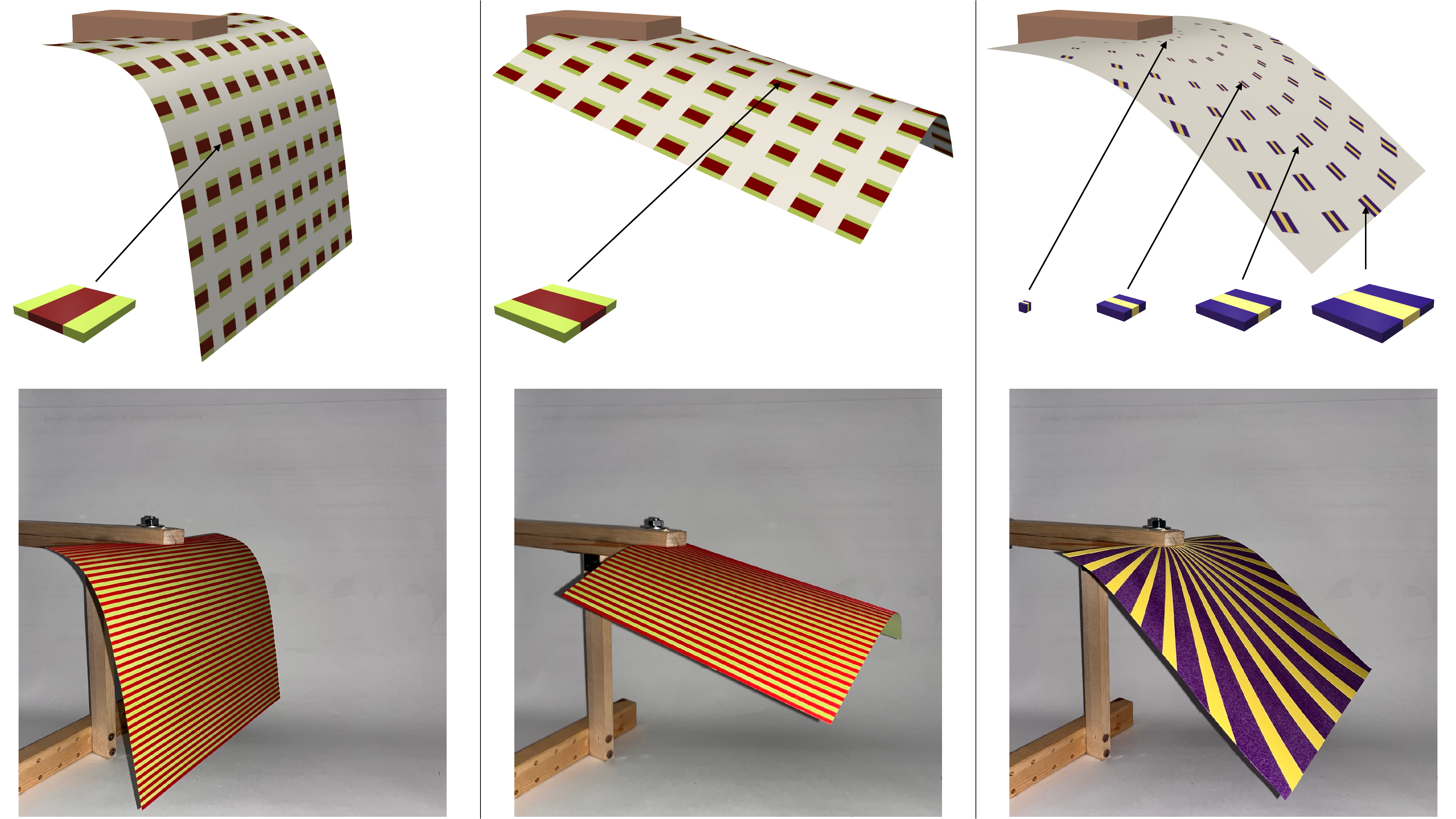}
\caption{Top row: deformed configurations under a uniform vertical load for stripe-type microstructures (left and middle) and a microstructure with radial rays (right). Bottom row: photos of physical experiments with stripes of thicker paper ($300 g/m^2$, left and middle: red, right: yellow) glued on thinner paper 
($120g/m^2$, left and middle: green, right: purple).
}
\label{fig:Ex123}
\end{figure}
%ffffffffffffff

\noindent \emph{Microstructure with periodic diagonal aligned stripe pattern.} 
Now, we consider a constantly distributed microstructure consisting of layers of hard and soft material oriented diagonally in the first two coordinates, i.e.the hard material phase is given by all $y = (y',y_3)$ with 
$$\left \vert y' - \begin{pmatrix} 1 \\ 0 \end{pmatrix} \right\vert_1 \in (\tfrac14,\tfrac34)\cap (\tfrac54,\tfrac74)\,$$
where $\vert\cdot\vert_1$ is the $1$-norm in $\R^2$.
The resulting homogenized tensor in Voigt-type notation is
\[ \mathcal{C}^{2,\gamma,v} = \begin{pmatrix}
0.17883  & 0.10699 & 0.11725 \\
0.10699 & 0.17883& 0.11725 \\
0.11725 & 0.11725 & 0.13436
\end{pmatrix}\,. \]
The deformation was enforced by the boundary conditions \eqref{eq:bdexpA} 
reflecting a compression in the $x_1$ direction by a factor $\tfrac{3}{8}$. No surface load applies. Again, a small random initial deformation is considered.
The computed deformation is shown in ~\cref{fig:Ex45} on the top left. The qualitative behavior is compared with a sheet of paper (blue, $120g/m^2$) of size $28cm\times28cm$, where stripes of thick paper (orange, $300 g/m^2$) of width $5mm$ were glued on, with a distance of $5mm$, in direction of one diagonal of the paper. A photo of this experiment is shown in ~\cref{fig:Ex45} on the left of the bottom row.
Due to the anisotropic microscopic material distribution the macroscopic elastic energy is also anisotropic leading both in the simulation and the experiment to a firm twist of the resulting valley formed by the bent plate.
\medskip

\noindent \emph{Microstructure with axes aligned and diagonal trusses of hard material.} 
In this experiment, we take into account a macroscopically varying microstructure with trusses of hard material on a soft material background.
In the rescaled model with vertically homogeneous material on $(0,1)^2 \times (-\tfrac12, \tfrac12)$  
the hard material phase is given by 
\begin{align*}
\big\{ y \;:\; & 
1- \tfrac{b}{\sqrt{2}} \vert y'- (1,0)^\top \vert_1  < 1+ \tfrac{b}{\sqrt{2}}  \vee
1- \tfrac{b}{\sqrt{2}} \vert y'\vert_1 < 1+ \tfrac{b}{\sqrt{2}} \vee \\
& y_1 < \tfrac{a}{2}  \vee  y_1 > 1- \tfrac{a}{2} \vee 
y_2 < \tfrac{a}{2}  \vee  y_2 > 1- \tfrac{a}{2}
\big\}
\end{align*}
with thicknesses $a(x_1)=(1-x_1) \frac{2-\sqrt{3}}{2}$, $b(x_1)=x_1 \frac{2-\sqrt{3}}{2}$ of the axes aligned and diagonal trusses, respectively.
The prototype of the fundamental cell and the resulting fine scale pattern with cell size $\varepsilon = \tfrac1{16}$ in $y'$ coordinates is depicted in 
~\cref{fig:Ex45} on the right. On the macroscale the boundary conditions \eqref{eq:bdexpA} and load are applied. 
Furthermore, for the paper experiment counterpart, the truss structure with $14^2$ $2cm\times2cm$ cells of thick paper ($300g/m^2$) is glued 
on a $28cm\times 28cm$ thin paper ($120g/m^2$). A comparison of the simulated deformation for the homogenized model and the experiment 
is shown as well in ~\cref{fig:Ex45}. It is clearly visible that the bent plate is no longer symmetric in $x_1$ direction and the bending is much stronger on the right.
%ffffffffffffff
\begin{figure}[htbp]
\centering
\begin{tikzpicture}
\node at (0,0){\includegraphics[width=0.98\linewidth]{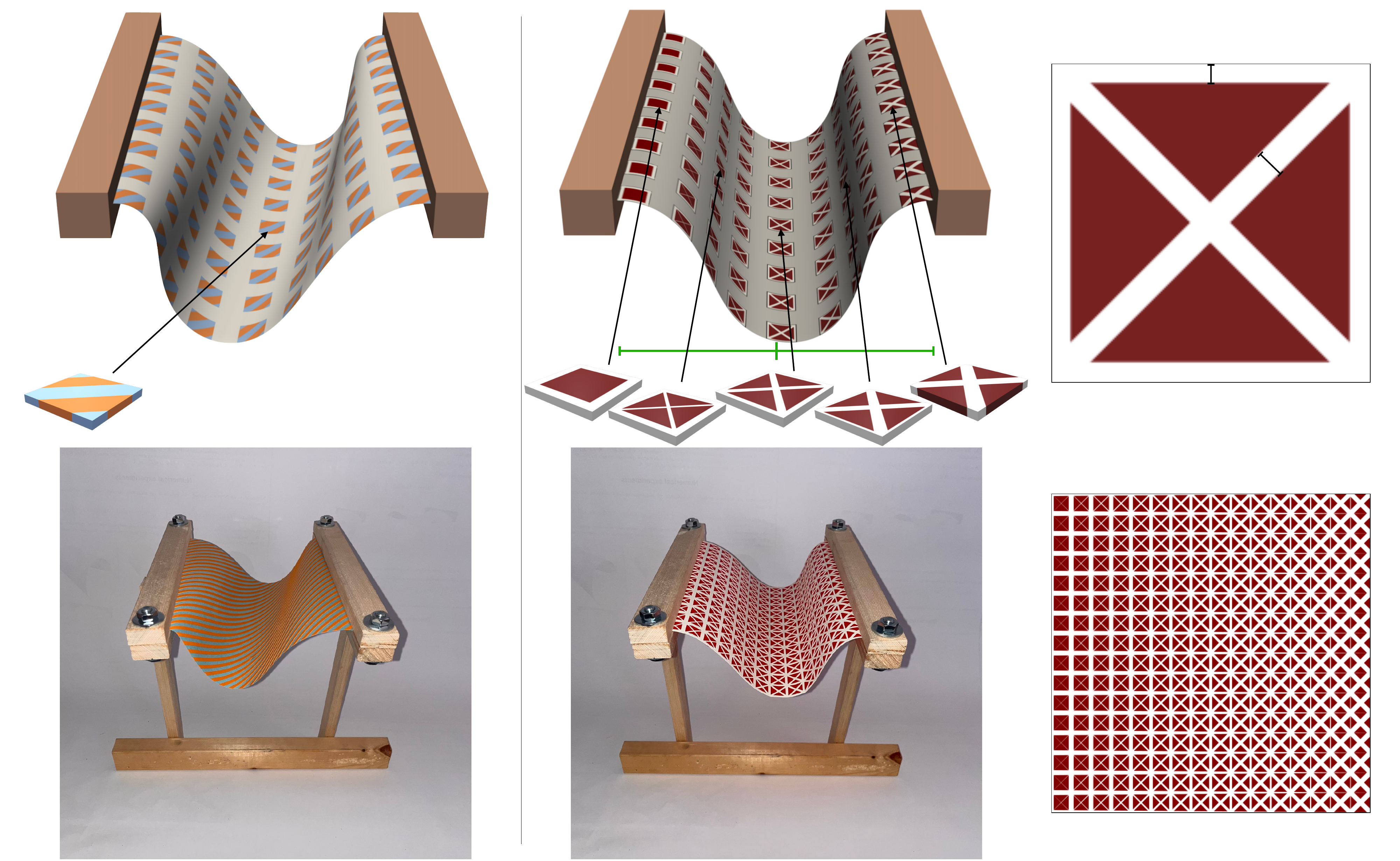}};
\node at (4.8,3.25) {$a$};
\node at (5.3,2.55) {$b$};
\end{tikzpicture}
\caption{Top left: deformed configuration under compression enforced by the boundary conditions \eqref{eq:bdexpA} with soft material in blue and hard material in orange, bottom left: corresponding experiment with stripes of thicker paper ($300g/m^2$, orange) glued on thin paper ($120g/m^2$blue); top middle: deformed configuration for a truss type microstructure under the same boundary conditions, with soft material in red and hard material in white, 
top right material distribution on the rescaled microscopic cell, bottom middle: a correspondingly deformed sheet of paper (red) with glued layer structure of the thicker paper (white), bottom right: undeformed experimental plate configuration viewed from above.}
\label{fig:Ex45}
\end{figure}
%ffffffffffffff

\bibliographystyle{siam}

\begin{thebibliography}{20}
	\bibitem{Ab05}
	{\sc A.~Abdulle}, {\em On A Priori Error Analysis of Fully Discrete Heterogeneous Multiscale FEM}, Multiscale Modeling and Simulation, 4 (2005), pp.~447--459.
	
	\bibitem{Ab06}
	{\sc A.~Abdulle}, {\em Analysis of a Heterogeneous Multiscale FEM for Problems in Elasticity}, Mathematical Models and Methods in Applied Sciences, 16 (2006), pp.~615--635.
	
	\bibitem{AlBr05}
	{\sc G.~Allaire and R.~Brizzi}, {\em A Multiscale Finite Element Method for Numerical Homogenization}, Multiscale Modeling and Simulation, 4 (2005), pp.~790--812.
		
	\bibitem{AlGePa19}
	{\sc G.~Allaire, P.~Geoffroy-Donders, and O.~Pantz}, {\em Topology optimization of modulated and oriented periodic microstructures by the homogenization method}, Computers \& Mathematics with Applications, 78 (2019), pp.~2197--2229.
	
	\bibitem{Ba11}
	{\sc S.~Bartels}, {\em Approximation of large bending isometries with discrete
		{K}irchhoff triangles}, SIAM J. Numer. Anal., 51 (2013), pp.~516--525.
	
	\bibitem{Ba15}
	{\sc S.~Bartels}, {\em Numerical methods for nonlinear partial differential
		equations}, vol.~47 of Springer Series in Computational Mathematics,
	Springer, Cham, 2015.
	
	\bibitem{Ba17}
	{\sc S.~Bartels}, {\em Numerical solution of a {F}{\"o}ppl--von
		{K}\'{a}rm\'{a}n model}, SIAM J. Numer. Anal., 55 (2017),
	pp.~1505--1524.
	
	\bibitem{BaBoNo17}
	{\sc S.~Bartels, A.~Bonito, and R.~H. Nochetto}, {\em Bilayer plates: Model
		reduction, {$\Gamma$}-convergent finite element approximation, and discrete
		gradient flow}, Communications on Pure and Applied Mathematics, 70 (2017),
	pp.~547--589.
	
	\bibitem{BoNePa23}
	{\sc K.~B\"{o}hnlein, S.~Neukamm, D.~Padilla-Garza, and O.~Sander}, {\em A Homogenized Bending Theory for Prestrained Plates}, Journal of Nonlinear Science, 33, 22 (2023).
	
	\bibitem{BoGuNo22}
	{\sc A.~Bonito, D.~Guignard, R.~Nochetto, and S.~Yang}, {\em LDG approximation of large deformations of prestrained plates}, Journal of Computational Physics, 448 (2022).
	
	\bibitem{BoGuNo21}
	{\sc A.~Bonito, D.~Guignard, R.~Nochetto, and S.~Yang}, {\em Numerical analysis
		of the {LDG} method for large deformations of prestrained plates}, IMA Journal of Numerical Analysis, 43 (2023), pp.~ 627--662.
	
	\bibitem{BoNoNt20}
	{\sc A.~Bonito, R.~H. Nochetto, and D.~Ntogkas}, {\em {DG} approach to large
		bending plate deformations with isometry constraint}, Mathematical Models and Methods in Applied Sciences, 31 (2021), pp.~133--175.
	
	\bibitem{Br13}
	{\sc D.~Braess}, {\em {F}inite {E}lements}, Springer, 5~ed., 2013.
	
	\bibitem{Ci78}
	{\sc P.~Ciarlet}, {\em The finite element method for elliptic problems}, North-Holland Publishing Co., Amsterdam-New York-Oxford, 1978.
	
	\bibitem{BeSc21}
	{\sc M.~de Benito Delgado and B.~Schmidt}, {\em A hierarchy of multilayered plate models}, ESAIM: Control, Optimisation and Calculus of Variations, 27 (2021), S16.
	
	\bibitem{EEn03}
	{\sc W.~E and B.~Engquist}, {\em The heterogeneous multiscale methods}, Commun. Math. Sci., 1 (2003), pp.~87--132.
	
	\bibitem{EEnHu03}
	{\sc W.~E, B.~Engquist, and Z.~Huang}, {\em Heterogeneous Multiscale Method: A General Methodology for Multiscale
		Modeling}, Physical Review B, 67 (2003), pp.~1--4.
	
	\bibitem{EMiZh05}
	{\sc W.~E, P.~Ming, and P.~Zhang}, {\em Analysis of the heterogeneous multiscale method for elliptic homogenization
		problems}, J. Amer. Math. Soc., 18 (2005), pp.121--156.

	\bibitem{FrJaMue02b}
	{\sc G.~Friesecke, R.~D. James, and S.~M{\"{u}}ller}, {\em A theorem on
		geometric rigidity and the derivation of nonlinear plate theory from
		three-dimensional elasticity}, Comm. Pure Appl. Math., 55 (2002),
	pp.~1461--1506.
	
	\bibitem{GiTr83}
	{\sc D.~Gilbarg and N.S.~Trudinger}, {\em Elliptic partial differential equations of second order}, vol. 224 of Grundlehren der math. Wissensch., Springer, 1983.
	
	\bibitem{Ho11}
	{\sc P.~Hornung}, {\em Approximation of flat {$W^{2,2}$} isometric immersions
		by smooth ones}, Arch. Ration. Mech. Anal., 199 (2011), pp.~1015--1067.
	
	\bibitem{Ho17}
	{\sc P.~Hornung}, {\em Stationary points of nonlinear plate theories}, J. Funct. Anal., 273 (2017), pp.~946--983.

	\bibitem{HoNeVe14}
	{\sc P.~Hornung, S.~Neukamm, and I.~Vel{\v{c}}i{\'c}}, {\em Derivation of a homogenized nonlinear plate theory from 3d elasticity}, Calculus of variations and partial differential equations, 51 (2014), pp.~677--699.
	
	\bibitem{NeOl15}
	{\sc S.~Neukamm and H.~Olbermann}, {\em Homogenization of the nonlinear bending theory for plates}, Calculus of Variations and Partial Differential Equations, 53 (2015), pp.~719--753.
	
	\bibitem{Pa04}
	{\sc M.R.~Pakzad}, {\em On the {S}obolev space of isometric immersions}, J. Differential Geom., 66 (2004), pp.~47--69.
	
	\bibitem{RuSiSm22}
	{\sc M.~Rumpf, S.~Simon, and C.~Smoch}, {\em Finite Element Approximation of Large-Scale Isometric Deformations of Parametrized Surfaces}, SIAM J. Numer. Anal., 60 (2022), pp.~2945--2962.
	
	\bibitem{Ve15}
	{\sc I.~Vel{\v{c}}i{\'c}}, {\em On the derivation of homogenized bending plate model}, Calculus of variations and partial differential equations, 53 (2015), pp.~561--586.
	
	\bibitem{WaBi06}
	{\sc A.~W{\"a}chter and L.~T. Biegler}, {\em {On the Implementation of a
			Primal-Dual Interior Point Filter Line Search Algorithm for Large-Scale
			Nonlinear Programming}}, Mathematical Programming, 106 (2006), pp.~25--57.
		
	\bibitem{YoNa07}
	{\sc T.~Yokoyama and K.~Nakai}, {\em Evaluation of in-plane orthotropic elastic constants of paper and paperboard}, Proceeding of 2007 SEM Annual Conference \& Exposition on Experimental and Applied Mechanics, 2007.
	
\end{thebibliography}

\end{document}